\renewcommand{\d}{\mathrm{d}}
\newcommand{\scri}{{\mathscr I}}
\newcommand{\hook}{{\setlength{\unitlength}{11pt}   
                   \begin{picture}(.833,.8)
                   \put(.15,.08){\line(1,0){.35}}
                   \put(.5,.08){\line(0,1){.5}}
                   \end{picture}}}
\newtheorem{definition}{Definition}
\newtheorem{theorem}{Theorem}
\newtheorem{corollary}{Corollary}
\newtheorem{lemma}{Lemma}
\newtheorem{remark}{Remark}
\begin{document}
\mbox{} \thispagestyle{empty}

\begin{center}
\bf{\Huge Geometric scattering for nonlinear wave equations on the Schwarzschild metric} \\

\vspace{0.1in}

{Pham Truong Xuan}\footnote{Thang Long Institute of Mathematics and Applied Sciences (TIMAS), Thang Long University, Nghiem Xuan Yem, Hoang Mai, Hanoi, Vietnam and Faculty of Mathematics and Informatics, Hanoi University of Science and Technology, 1 Dai Co Viet, Hanoi, Vietnam. Email: xuanpt@thanglong.edu.vn or phamtruongxuan.k5@gmail.com}
\end{center}

{\bf Abstract.} 
In this paper, we establish a conformal scattering theory for defocusing semilinear wave equations on Schwarzschild spacetime. We combine the energy and pointwise decay results for solutions obtained in \cite{Yang} with a Sobolev embedding on spacelike hypersurfaces to derive two-sided energy estimates between the energy flux of solutions through the Cauchy initial hypersurface $\Sigma_0 = \{ t = 0 \}$
and that through the null conformal boundaries $\mathfrak{H}^+ \cup \scri^+$ (respectively, $\mathfrak{H}^- \cup \scri^-$). By combining these estimates with the well-posedness of the Cauchy and Goursat problems for nonlinear wave equations, we construct a bounded linear and locally Lipschitz scattering operator that maps past scattering data to future scattering data.

{\bf Keywords.} Conformal scattering, Goursat problem, nonlinear wave equations, Schwarzschild metric, null infinity, Penrose's conformal compactification.

{\bf Mathematics subject classification.} 35L05, 35P25, 35Q75, 83C57.

\tableofcontents

\section{Introduction}
In the present paper, we consider the following nonlinear (defocusing semilinear) wave equation on the exterior domain $(\mathcal{B}_I, g)$ of the Schwarzschild black hole {(see Subsection~\ref{S23} for further details)}:
\begin{equation}\label{0nonlinearequation}
\Box_g\psi + |\psi|^2\psi =0,
\end{equation}
where $g$ is the Schwarzschild metric, $\Box_g$ is the scalar Laplace operator associating with the metric $g$.

We first briefly recall some important results on the scattering theory for nonlinear scalar wave equations. The first work providing an analytic scattering theory for equation~\eqref{0nonlinearequation} on Minkowski spacetime was due to Baez and Zhou in~\cite{Ba1989b}. Subsequently, Hidano~\cite{Hi1,Hi2,Hi3} established analytic scattering results for nonlinear wave equations on Euclidean space $\mathbb{R}^n$ (with $n=3,4$) for nonlinearities of the form $f(u) = |u|^{p-1}u$ with $p>1$.
On the other hand, geometric scattering (also called conformal scattering) for equation~\eqref{0nonlinearequation} on Minkowski spacetime was initiated by Baez \emph{et al.} in~\cite{Ba1989a,BaSeZho1990}. Later, Joudioux studied conformal scattering for equation~\eqref{0nonlinearequation} on {asymptotically} simple spacetimes in~\cite{Jo2012,Jo2019}. To construct the scattering operator, the authors in~\cite{BaSeZho1990,Jo2012,Jo2019} established the well-posedness of the Goursat problem for equation~\eqref{0nonlinearequation} with initial data prescribed on the conformal null boundaries of spacetime. We note that the Goursat problem was first introduced by H\"ormander for linear scalar wave equations on spatially compact spacetimes.
We now emphasize that there are two fundamental geometric ingredients underlying the {formulation} of the Goursat problem for wave equations. The first is the conformal compactification of spacetime, an idea introduced by Penrose~\cite{Pe1964}, who provided a conformal embedding of Minkowski spacetime into the Einstein {cylinder}. The second is the notion of the radiation field introduced by Friedlander~\cite{Fri1962,Fri1964,Fri1967}, which defines scattering data on the conformal null boundary. In particular, using the radiation field, Friedlander established scattering theory for linear wave equations on asymptotically Euclidean manifolds in~\cite{Fri2001}.

Conformal scattering theory on curved spacetimes was developed as a systematic research program by Mason and Nicolas, who constructed scattering theories for the linear wave, Dirac, and Maxwell equations on asymptotically simple spacetimes in~\cite{MaNi2004}. Working also on asymptotically simple spacetimes, Joudioux established conformal scattering results for the nonlinear wave equation~\eqref{0nonlinearequation} in~\cite{Jo2012,Jo2019}. Subsequently, Nicolas constructed a conformal scattering theory for linear wave equations on Schwarzschild spacetime, which is static and spherically symmetric, in~\cite{Ni2016}, while Mokdad developed the corresponding theory for the Maxwell equations on Reissner--Nordstr\"om--de~Sitter spacetime in~\cite{Mo2019}. Extending the works of Nicolas and Mokdad, Pham constructed conformal scattering theories for scalar wave equations with potentials and for tensorial wave equations on Schwarzschild spacetime in~\cite{Pha2019,Pha2023}, respectively. Furthermore, Pham established a conformal scattering theory for the Dirac equation on Kerr spacetime in~\cite{Pha2022}. In addition, Taujanskas constructed a conformal scattering theory for the Maxwell--scalar field system on de~Sitter spacetime in~\cite{Ta2019}. To the best of our knowledge, there is no existing work that addresses the scattering theory (either analytic or conformal) for equation~\eqref{0nonlinearequation} on Schwarzschild spacetime.

In the present paper, we construct a conformal scattering theory for equation~\eqref{0nonlinearequation} on Schwarzschild spacetime. We describe our strategy as follows. By using a conformal transformation, we obtain the conformally rescaled wave equation on the Penrose compactification $(\bar{\mathcal{B}}_I, \hat{g})$ of the exterior region of the Schwarzschild black hole. We consider a foliation $\{ \mathcal{S}_\tau \}_{\tau}$ of the future domain $\mathcal{I}^+(\Sigma_0)$, where each hypersurface $\mathcal{S}_\tau$ consists of two parts: a spacelike part for $2M \leq r \leq r_{FH}$ and a null part for $r \geq r_{FH}$. Using the energy and pointwise decay estimates for solutions to nonlinear wave equations recently obtained by Yang \emph{et al.} in~\cite{Yang}, we prove that the energy flux of solutions through $\mathcal{S}_T$ tends to zero as $T \to +\infty$. This yields an energy equality between the energy on $\Sigma_0$ and that on $\mathfrak{H}^+ \cup \mathscr{I}^+$ (see Theorem~\ref{equality1}). By employing Sobolev embeddings on spacelike hypersurfaces, we further establish two-sided energy estimates (without higher-order terms) between the energy flux on $\Sigma_0$ and that on $\mathfrak{H}^+ \cup \mathscr{I}^+$, as well as on $\Sigma_t$ (see Corollary~\ref{1} and Theorem~\ref{equality2}). These estimates allow us to prove the well-posedness of the Cauchy and Goursat problems in Section~\ref{S4}. In particular, using the two-sided energy estimates between $\Sigma_0$ and $\Sigma_t$ together with the method in~\cite{CaCho}, we obtain the well-posedness of the Cauchy problem for nonlinear wave equations on $\bar{\mathcal{B}}_I$ (see Theorem~\ref{CauchyProblem}). As a consequence, we define the trace operator $\mathbb{T}^+$, which maps initial data with smooth compact support on $\Sigma_0$ to scattering data with smooth compact support on $\mathfrak{H}^+ \cup \mathscr{I}^+$, namely
$\mathbb{T}^+ : \mathcal{C}_0^\infty(\Sigma_0) \times \mathcal{C}_0^\infty(\Sigma_0)
\longrightarrow
\mathcal{C}_0^\infty(\mathfrak{H}^+) \times \mathcal{C}_0^\infty(\mathscr{I}^+)$.
This operator extends continuously to the energy space as
$\mathbb{T}^+ : \mathcal{H} \longrightarrow \mathcal{H}^+$,
see Theorem~\ref{Trace}. Similarly, we define the trace operator $\mathbb{T}^-$, which associates initial data on $\Sigma_0$ to scattering data on $\mathfrak{H}^- \cup \mathscr{I}^-$.
Using again the energy estimates and Sobolev embeddings, we prove that $\mathbb{T}^+$ is injective and locally Lipschitz. The Goursat problem is shown to be well-posed by applying the results of~\cite{Jo2012,Jo2019} on the domain $\bar{\mathcal{B}}_I \setminus (\mathcal{O} \cup \mathcal{V})$, where $\mathcal{O}$ is a neighborhood of $i^+$ and $\mathcal{V}$ is a neighborhood of $i_0$, both lying away from the support of the initial data on $\mathfrak{H}^+ \cup \mathscr{I}^+$ (see Theorem~\ref{Goursat}). The well-posedness of the Goursat problem implies that $\mathbb{T}^+$ is surjective and that its inverse $(\mathbb{T}^+)^{-1}$ is locally Lipschitz. The same properties hold for $\mathbb{T}^-$.
Therefore, we obtain the conformal scattering operator for nonlinear wave equations
$\mathbb{S} = \mathbb{T}^+ \circ (\mathbb{T}^-)^{-1} : \mathcal{H}^- \longrightarrow \mathcal{H}^+,$
which is a bounded linear operator and locally Lipschitz.

Our paper is organized as follows. In Section~\ref{S2}, we recall the {Schwarzschild} metric, introduce suitable coordinate systems, describe Penrose's conformal compactification, construct a foliation of the future domain $\mathcal{I}^+(\Sigma_0)$, and derive a divergence identity for the nonlinear wave equation. In Section~\ref{S3}, we define the energies on null and spacelike hypersurfaces and establish several energy estimates. In Section~\ref{S4}, we prove the well-posedness of the Cauchy and Goursat problems and construct the conformal scattering operator for nonlinear wave equations.\\ 
{\bf Acknowledgment.} P.T. Xuan was funded by the Postdoctoral Scholarship Programme of Vingroup Innovation Foundation (VINIF), code VINIF.2023.STS.55.

\section{Schwarzschild spacetime and nonlinear wave equations}\label{S2} 

\subsection{Schwarzschild metric, coordinates and foliation}
We consider a four-dimensional Schwarzschild spacetime $\mathcal{M} = \mathbb{R}_t \times (0,+\infty)_r \times S^2_\omega$,
equipped with the Lorentzian metric
\begin{equation}
g = F \,\d t^2 - F^{-1} \,\d r^2 - r^2 \,\d \omega^2,
\end{equation}
where $F := F(r) = 1 - \dfrac{2M}{r}$, $\d \omega^2$ denotes the standard metric on the unit $2$-sphere $S^2$, and $M>0$ is the mass of the black hole. In this paper, we work on the exterior region of the black hole, $
\mathcal{B}_I := \{ r > 2M \}$.

We recall several coordinate systems on Schwarzschild spacetime. First, the Regge--Wheeler variable is defined by
$r_* = r + 2M \log (r - 2M)$,
so that $\d r = F \,\d r_*$. In the coordinates $(t, r_*, \omega)$, the Schwarzschild metric takes the form
\begin{equation}
g = F(\d t^2- \d r_*^2) - r^2\d\omega^2.
\end{equation}
We also introduce the retarded and advanced Eddington--Finkelstein coordinates $u$ and $v$, defined by
\begin{equation}
u=t-r_*, \, v= t+r_*.
\end{equation}
In terms of the coordinates $(u, v, \omega)$, the Schwarzschild metric can be written as
\begin{equation}
g = F\d u \d v - r^2\d\omega^2.
\end{equation}

In Schwarzschild spacetime, there are two families of null geodesics, called the principal null geodesics, which are the integral curves of the outgoing and incoming principal null directions
\begin{equation}
l =\partial_v = \partial_t + \partial_{r_*} \hbox{  and  } n = \partial_u = \partial_t - \partial_{r_*},
\end{equation}
respectively.

We now recall the Penrose conformal compactification of the exterior domain $(\mathcal{B}_I, g)$. We refer the reader to~\cite{Ni1995,Ni2016} for further details. By setting $\Omega = 1/r$ and $\hat{g} = \Omega^2 g$, one obtains a conformal compactification of the exterior region in the coordinates $(u, R = 1/r, \omega)$, namely $
\left( \mathbb{R}_u \times \left[0, \frac{1}{2M}\right] \times S^2_\omega, \, \hat{g} \right)$, equipped with the rescaled metric
\begin{equation}
\hat{g} = R^2(1-2MR) \d u^2 - 2\d u\d R - \d\omega^2.
\end{equation}
The future null infinity $\scri^+$ and the past event horizon $\mathfrak{H}^-$ are null hypersurfaces in the conformally rescaled spacetime
$$\scri^+ = \mathbb{R}_u \times \left\{ 0\right\}_R \times S_\omega^2, \, \mathfrak{H}^- = \mathbb{R}_u \times \left\{ 1/2M\right\}_R \times S^2_\omega.$$

Similarly, in the advanced coordinates $(v, R, \omega)$, the rescaled metric $\hat{g}$ takes the form
\begin{equation}
\hat{g} = R^2(1-2MR)\d v^2 + 2 \d v \d R - \d\omega^2.
\end{equation}
Past null infinity $\scri^-$ and the future event horizon $\mathfrak{H}^+$ are described as null hypersurfaces
$$\scri^- = \mathbb{R}_v \times \left\{ 0\right\}_R \times S_\omega^2, \, \mathfrak{H}^+ = \mathbb{R}_v \times \left\{ 1/2M\right\}_R \times S^2_\omega.$$
The Penrose conformal compactification of $\mathcal{B}_I$ is the spacetime
$$(\bar{\mathcal{B}_I},\hat{g}), \, \bar{\mathcal{B}_I} = \mathcal{B}_I \cup \scri^+ \cup \mathfrak{H}^+ \cup \scri^-\cup \mathfrak{H}^-\cup S_c^2,$$
where $S_c^2$ is the bifurcation sphere (also called the crossing sphere). 
\begin{remark}
Note that there are three singular points of the rescaled metric $\hat{g}$, which correspond to the points missing from the conformal boundary $\scri^+ \cup \mathfrak{H}^+ \cup \scri^- \cup \mathfrak{H}^-$. Specifically, these are the future timelike infinity $i^+$, which is the limit point of uniformly timelike curves as $t \to +\infty$, the past timelike infinity $i^-$, which is the symmetric of $i^+$ in the distant past, and the spacelike infinity $i_0$, which is the limit point of uniformly spacelike curves as $r \to +\infty$. Moreover, these points can be represented as $2$-spheres. For further details on the geometry of Schwarzschild spacetime, we refer the reader to~\cite{Cha}. 
\end{remark}
In order to define a global timelike vector field on $\bar{\mathcal{B}}_I$, we choose an increasing function $\lambda(r) \geq r_*$ such that $2 - F(r)\lambda'(r) > 0$ and $\lambda(r) = r_*$ for $r \geq \dfrac{5M}{2}$. Setting
$\tilde{v} = v - \lambda(r) = t + r_* - \lambda(r)$,
we have $\tilde{v} = t$ when $r \geq \dfrac{5M}{2}$. In the coordinates $(\tilde{v}, r, \omega)$, the Schwarzschild metric takes the form
\begin{equation}
g= F\d \tilde{v}^2 - 2( 1 - F\lambda'(r))\d \tilde{v}\d r - (2\lambda'(r) - F(\lambda'(r))^2)\d r^2 - r^2\d \omega^2.
\end{equation}
{Observe that the vector field \( K = \partial_{\tilde{v}} \) is Killing and transverse to the horizon. Moreover, one can check that the hypersurface $\widetilde{\Sigma}_\tau = \{ \tilde{v} = \text{constant} \}$
is spacelike, since the gradient \( \nabla \tilde{v} \) is timelike (see~\cite{Yang}; for further details, see~\cite{Mar}).}

{Let $r_{FH} > 5M$ be a sufficiently large constant.} We define the following hypersurface
$\mathcal{S}_\tau = \mathcal{N}_\tau \cup \mathcal{H}_\tau$, where
\begin{equation}
\mathcal{N}_{\tau} = \widetilde{\Sigma}_{\tau+(r_{FH})_*} \cap \left\{  2M \leq r < r_{FH}\right\}, {(r_{FH})_* = r_{FH} + 2M\log(r_{FH}-2M)} 
\end{equation}
and
\begin{equation}
\mathcal{H}_\tau = \left\{u = \tau,\, \tilde{v}\geq 0  \right\} \cap \{ r\geq r_{FH} \}.
\end{equation}
The future $\mathcal{I}^+(\mathcal{S}_0)$ can be foliated by $\{\mathcal{S}_\tau \}_{\tau\geq 0}$. 

The volume forms {associated with} the Schwarzschild metric $g$ and the rescaled metric $\hat{g}$ are given as follows
\begin{equation}
\mathrm{dVol}_g = r^2F \d t \wedge \d r_* \wedge \d^2\omega = \frac{r^2F}{2}\d u \wedge\d v \wedge \d^2\omega = r^2F \d \tilde{v} \d r_*  \d^2\omega.
\end{equation}
and 
\begin{equation}
\mathrm{dVol}_{\hat g} = R^2F \d t \wedge \d r_* \wedge \d^2\omega = \frac{R^2F}{2} \d u \wedge \d v \wedge \d^2\omega = R^2F \d \tilde{v}\d r_* \d^2\omega,
\end{equation}
respectively, where $\d^2 \omega$ denotes the standard area element on the unit $2$-sphere $S^2$.

\begin{figure}[H]
\begin{center}
\includegraphics[scale=0.6]{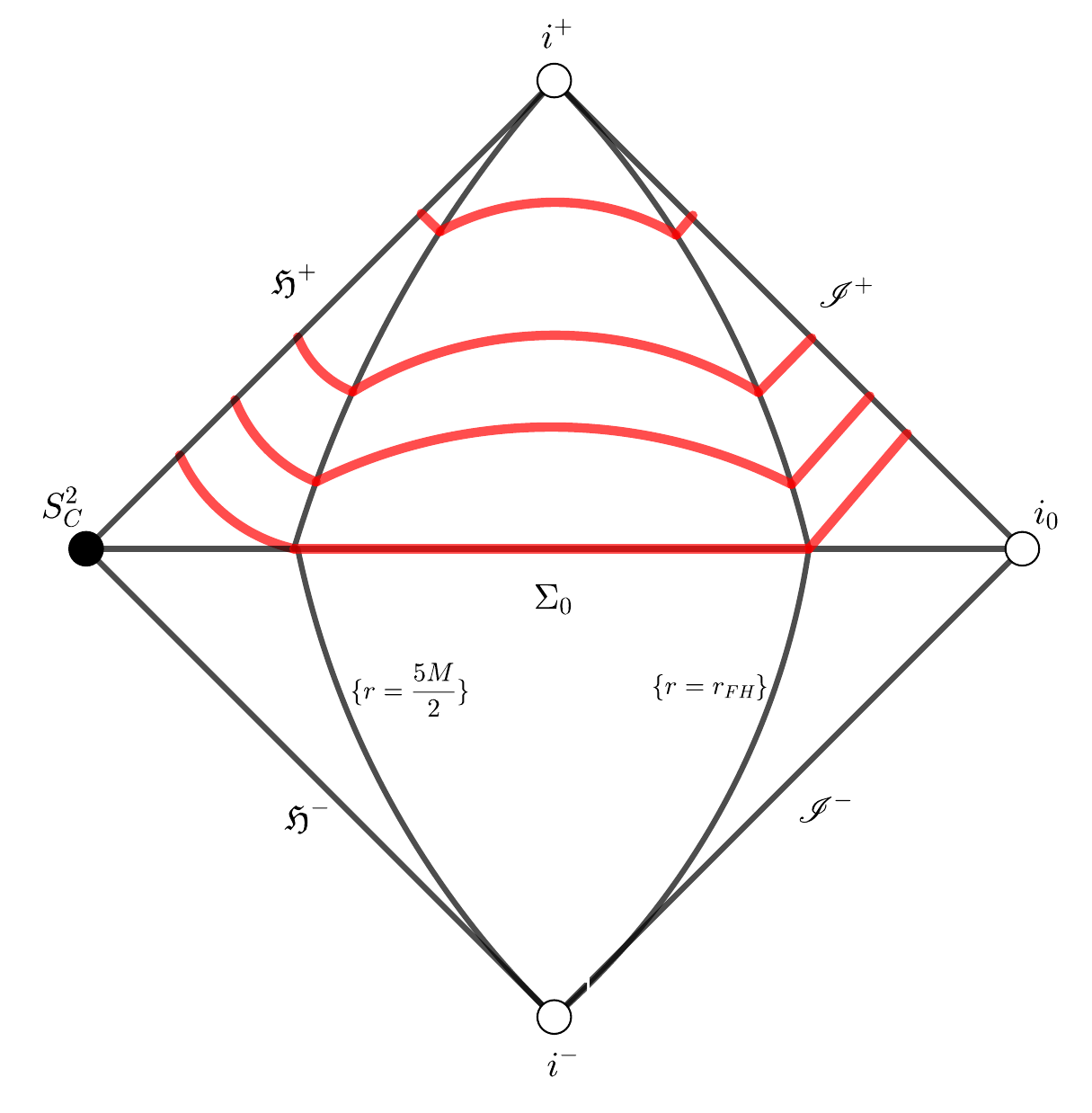}
\caption{Conformal compactification diagram and foliation $\left\{ \mathcal{S}_\tau \right\}_\tau$ of $\mathcal{I}^+(\mathcal{S}_0)$}
\end{center}
\end{figure}

\subsection{The nonlinear wave equation and  conservation law}\label{S23}
We consider the defocusing semilinear wave equation on $(\mathcal{B}_I,g)$:
\begin{equation}\label{nonlinearequation}
\Box_g\psi + |\psi|^2\psi =0,
\end{equation}
where $\Box_g$ is the scalar Laplace--Beltrami operator {associated with} the Schwarzschild metric $g$, given by
$$\Box_g = \frac{1}{F}\left( \frac{\partial^2}{\partial t^2} - \frac{1}{r^2}\frac{\partial}{\partial r_*} r^2 \frac{\partial}{\partial r_*} \right) - \frac{1}{r^2}\Delta_{S^2}.$$
{Note that Equation \eqref{nonlinearequation} is a special case of the conformally coupled wave equation in four dimensions,
\begin{equation}\label{ConForEqs}
\square_g\psi + |\psi|^2\psi + \frac{1}{6} S_g \, \psi = 0,
\end{equation}
where $S_g$ denotes the scalar curvature, which is identically zero for the Schwarzschild metric $g$.}

Setting $\hat{\psi} = \Omega^{-1} \psi = r \psi$, and using \eqref{ConForEqs}, we obtain the rescaled equation, that is, the conformal form of \eqref{nonlinearequation} on $ (\bar{\mathcal{B}}_I, \hat{g})$, as follows (see~\cite{Pha2020,NiXu2019} and, for further details, see~\cite[Vol.~1]{PeRi84}):
\begin{equation}\label{rescaledequation}
\Box_{\hat g}\hat{\psi} + |\hat\psi|^2\hat{\psi} +  2MR\hat{\psi}  =0,
\end{equation}
{where $\dfrac{1}{6}S_{\hat{g}} = R^{-3}\Box_g R = 2MR$} and $\Box_{\hat g}$ is the scalar Laplace--Beltrami operator {associated with} the conformal metric $\hat{g}$, given by
$$\Box_{\hat g} = \frac{r^2}{F}\left( \frac{\partial^2}{\partial t^2} - \frac{\partial}{\partial r_*} \right) - \Delta_{S^2}.$$

We consider the following stress--energy tensor {associated with} the linear wave equation $\Box_{\hat g}\hat{\psi} = 0$:
$$\hat{T}_{ab} = \hat{\nabla}_a\hat\psi \hat{\nabla}_b\hat\psi - \frac{1}{2}\left<\hat{\nabla}\hat\psi,\hat{\nabla}\hat\psi\right>_{\hat g} \hat{g}_{ab}.$$
The divergence of $\hat{T}_{ab}$ is
$$\hat{\nabla}^a\hat{T}_{ab} = \left( \Box_{\hat g}\hat{\psi} \right)\hat{\nabla}_b\hat{\psi} = (- 2MR\hat{\psi} - |\hat\psi|^2\hat{\psi}) \hat{\nabla}_b(\hat\psi).$$
By contracting $\hat{T}_{ab}$ with the timelike vector field $\partial_t$, we obtain the energy current
$\hat{J}_a = (\partial_t)^b\hat{T}_{ab}$.

Setting $V = \left(MR\hat{\psi}^2 + \dfrac{1}{4}\psi^4 \right)\partial_t$ and using the fact that $\hat{\Gamma}^{\mathbf{a}}_{\mathbf{a}0} = 0$ in the coordinates $(t, r, \omega)$, we have
$$\mathrm{div}V= \hat{\nabla}_aV^a = \frac{\partial}{\partial t}\left( MR \hat{\psi}^2 + \frac{1}{4}\hat\psi^4 \right) + \hat{\Gamma}^{\bf a}_{\bf {a} 0}V^0 = (2MR\hat{\psi} + |\hat\psi|^2\hat{\psi})\partial_t\hat\psi.$$
This yields the following divergence equation
\begin{equation}\label{conser}
\hat{\nabla}_a (\hat{J}^a + V^a) = 0, \hbox{ with } V= \left( MR\hat{\psi}^2 + \frac{1}{4}\hat{\psi}^4\right)\partial_t.
\end{equation}

\section{Energy fluxes, energy decay and pointwise decay}\label{S3}

\subsection{Energy fluxes through spacelike and null hypersurfaces}
We recall the Hodge dual of a one-form $\alpha$ on a spacetime $({\cal M}, g)$ (see \cite{Pha2020} and, for more details, \cite{PeRi84}):
\begin{equation*}
(*\alpha)_{abcd} = e_{abcd}{\alpha}^d,
\end{equation*}
where $e_{abcd}$ is the volume form on $({\cal M}, g)$, denoted simply by $\mathrm{dVol}_g$. The differential form of $*\alpha$ is
\begin{equation*}
\d *\alpha = -\frac{1}{4}(\nabla_a\alpha^a)\mathrm{{dVol}}_g.
\end{equation*}
Hence, if the surface ${S}$ is the boundary of a bounded open set $\Omega$ and has an outgoing orientation, then, as a consequence of Stokes' theorem, we have
\begin{equation}\label{Stokesformula}
-4\int_{{S}}*\alpha = \int_{\Omega}(\nabla_a\alpha^a)\mathrm{{dVol}}_g.
\end{equation}

Assume that $\hat{\psi}$ is a solution to the Cauchy problem for equation \eqref{rescaledequation} with smooth and compactly supported initial data. We define the rescaled energy flux $\hat{\mathbb{E}}^{\partial_t}_{\mathcal S}$ through an oriented hypersurface $S$ by
\begin{equation}\label{EN}
\hat{\mathbb{E}}^{\partial_t}_S[\hat\psi; 3] = -4\int_{\mathcal S} *(\hat{J}_a + V_a)\d x^a = \int_{\mathcal S} (\hat{J}_a+V_a)\hat{N}^a\hat{L}\hook \mathrm{{dVol}}_{\hat g},
\end{equation}
where $\hat{L}$ is transverse to $\mathcal{S}$ and $\hat{N}$ denotes the normal vector field to $\mathcal{S}$, satisfying
$\hat{L}^a \hat{N}_a = 1$.

We now use formula \eqref{EN} to determine precisely the energy fluxes of $\hat{\psi}$ through the hypersurfaces $\Sigma_0$, $\mathcal{S}_T$, and $\mathcal{H}_T$. 
First, on $\Sigma_0$ we choose the vector fields
$\hat{L} = \dfrac{r^2}{F}\partial_t$ and $\hat{N} = \partial_t$.
Next, on future null infinity $\scri^+$, we choose $\hat{L}_{\scri^+} = -\partial_R$ in the $(u, R, \omega)$ coordinates. 
Hence, we have
$\hat{L}_{\scri^+} = \dfrac{r^2 F^{-1}}{2}\, l \big|_{\scri^+}$.
Moreover, on the null hypersurface $\mathfrak{H}^+$, we choose $\hat{L}_{\mathfrak{H}^+} = \partial_R$ in the $(v, R, \omega)$ coordinates. 
This leads to
$\hat{L}_{\mathfrak{H}^+} = \dfrac{r^2 F^{-1}}{2}\, n \big|_{\mathfrak{H}^+}$.

Clearly, we have $\hat{N} = \partial_t$ on both $\mathfrak{H}^+$ and $\scri^+$. 
This corresponds to $\hat{N} = \partial_v$ on $\mathfrak{H}^+$ and $\hat{N} = \partial_u$ on $\scri^+$. 
On the other hand, the vector field $V$ is parallel to $\partial_t$, which is null on $\mathfrak{H}^+$ and $\scri^+$. 
Therefore, in these two cases we have $V_a \hat{N}^a = 0$. Consequently, by using formula \eqref{EN}, we can easily compute the energy fluxes through $\Sigma_0$, $\scri^+_T$, and $\mathfrak{H}^+_T$ as
\begin{eqnarray}
\hat{\mathbb{E}}^{\partial_t}_{\Sigma_0}[\hat\psi;3] &=&  \int_{\Sigma_0}(\hat{J}_a+V_a)(\partial_t)^a r^2F^{-1}\partial_t \hook \mathrm{dVol}_{\hat g} \cr
&=& \frac{1}{2}\int_{\Sigma_0} \left( (\partial_t\hat{\psi})^2 + (\partial_{r_*}\hat{\psi})^2 \right.\cr
&&\left.\hspace{3cm} + R^2F|\nabla_{S^2}\hat{\psi}|^2 + (2MR\hat{\psi}^2 + \frac{1}{2}\hat{\psi}^4)FR^2 \right) \d r_*\d^2\omega,\cr
\hat{\mathbb{E}}^{\partial_t}_{\mathfrak{H}^+_T}[\hat\psi;3] &=&  \int_{\mathfrak{H}_T^+}\hat{J}_a(\partial_v)^a\hat{L}_{\mathfrak{H}^+}\hook \mathrm{dVol}_{\hat g} = \int_{\mathfrak{H}^+} (\partial_v\hat{\psi})^2 \d v\d^2\omega,\cr
\hat{\mathbb{E}}^{\partial_t}_{\scri^+_T}[\hat\psi;3] &=&  \int_{\scri_T^+}\hat{J}_a(\partial_u)^a\hat{L}_{\scri^+}\hook \mathrm{dVol}_{\hat g} = \int_{\scri_T^+} (\partial_u\hat{\psi})^2  \d u\d^2\omega.
\end{eqnarray}
Since the energy fluxes through the null hypersurfaces $\mathfrak{H}_T^+$ and $\scri_T^+$ do not contain the term $|\hat{\psi}|^4$, we may simply write
$$\hat{\mathbb{E}}^{\partial_t}_{\mathfrak{H}^+_T}[\hat\psi] = \hat{\mathbb{E}}^{\partial_t}_{\mathfrak{H}^+_T}[\hat\psi;3] \hbox{   and   } \hat{\mathbb{E}}^{\partial_t}_{\scri^+_T}[\hat\psi] = \hat{\mathbb{E}}^{\partial_t}_{\scri^+_T}[\hat\psi;3].$$

Moreover, the energy flux through $\mathcal{S}_T = \widetilde{\Sigma}_{T + (r_{FH})_*} \cup \mathcal{H}_T$ can be computed explicitly as follows:
\begin{eqnarray}\label{resenergy}
\hat{\mathbb{E}}^{\partial_t}_{\mathcal{S}_T}[\hat\psi;3] &=& \hat{\mathbb{E}}^{\partial_t}_{\widetilde{\Sigma}_{T+(r_{FH})_*}}[\hat\psi;3] + \hat{\mathbb{E}}^{\partial_t}_{\mathcal{H}_T}[\hat\psi;3]\cr
&=& \frac{1}{2}\int_{\widetilde{\Sigma}_{T+(r_{FH})_*}\cap \{ 2M\leq r< \frac{5M}{2} \}} \left( (\partial_{\tilde v}\hat{\psi})^2  +  R^2F |\nabla_{S^2}\hat\psi|^2 \right.\cr
&&\left.\hspace{7cm} + (2MR\hat{\psi}^2 + \frac{1}{2} \hat{\psi}^4)FR^2 \right) \d \tilde{v} \d^2\omega\cr
&&+\frac{1}{2}\int_{\widetilde{\Sigma}_{T+(r_{FH})_*}\cap \{ \frac{5M}{2}\leq r < r_{FH} \}} \left( (\partial_t\hat{\psi})^2 + (\partial_{r_*}\hat{\psi})^2 + R^2F|\nabla_{S^2}\hat{\psi}|^2 \right.\cr
&&\left.\hspace{7cm} + (2MR\hat{\psi}^2 + \frac{1}{2}\hat{\psi}^4) FR^2 \right) \d r_*\d^2\omega\cr
&&+ \int_{\mathcal{H}_T} \left( (\partial_v\hat{\psi})^2  +  R^2F |\nabla_{S^2}\hat\psi|^2 + R^2F\hat{\psi}^2 \right) \d v\d^2\omega.
\end{eqnarray}
\begin{remark}
We note that the higher-order term $|\hat{\psi}|^4$ appears only in the energy formula on the spacelike hypersurface $\Sigma_0$ and on the spacelike part of the hypersurface $\mathcal{S}_T$. 
This term can be controlled by the remaining terms in the energy formula using the Sobolev embedding $H^1 \hookrightarrow L^6$ on spacelike hypersurfaces (see Lemma~\ref{Sobolev} below).
\end{remark}

\subsection{Pointwise and energy decays and two sides of energy estimates}
By straightforward calculations, we obtain the following transformation from the integral of $(\partial_{r_*}\psi)^2$ on the spacelike hypersurface $\Sigma_T = \{ t = T \}$ to that of $(\partial_{r_*}\hat{\psi})^2$:
\begin{eqnarray*}
\int_{\Sigma_T}(\partial_{r_*}\psi)^2 r^2\d r_* \d^2\omega &=&\int_{\Sigma_T}(\partial_{r_*}(R\hat{\psi}))^2 r^2\d r_* \d^2\omega = \int_{\Sigma_T}\left(\frac{r\partial_{r_*}\hat{\psi}-\hat{\psi}\partial_{r_*}r}{r^2}\right)^2 r^2 \d r_* \d^2\omega\cr
&=&\int_{\Sigma_T}\left(\partial_{r_*}\hat{\psi}-\frac{F}{r}\hat{\psi}\right)^2 \d r_* \d^2\omega\cr
&=&\int_{\Sigma_T}\left((\partial_{r_*}\hat{\psi})^2 - 2(\partial_{r_*}\hat{\psi})\frac{F}{r}\hat{\psi} + \frac{F^2}{r^2}\hat{\psi}^2 \right) \d r_* \d^2\omega\cr
&=&\int_{\Sigma_T}\left((\partial_{r_*}\hat{\psi})^2 + \frac{FF'}{r}\hat{\psi}^2 - \partial_{r_*}\left( \frac{F}{r}\hat{\psi}^2 \right) \right) \d r_* \d^2\omega\cr
&=& \int_{\Sigma_T}\left((\partial_{r_*}\hat{\psi})^2 + 2MFR^3\hat{\psi}^2 \right) \d r_* \d^2\omega,
\end{eqnarray*}
where we used the facts that $F'= d( 1 - 2MR )/dr =2MR^2$ and $\int_{\Sigma_T}\partial_{r_*}\left( \frac{F}{r}\hat{\psi}^2 \right) \d r_* \d^2\omega = 0$ for the smooth, compactly supported function $\hat{\psi}$. Consequently, we obtain the following expression for the energy flux through $\Sigma_0$ in terms of the original field:
\begin{equation}\label{EnEq}
\hat{\mathbb{E}}^{\partial_t}_{\Sigma_0}[\hat{\psi};3] =\frac{1}{2}\int_{\Sigma_0}\left(r^2(\partial_t{\psi})^2 + r^2 (\partial_{r_*}{\psi})^2 + F|\nabla_{S^2}\psi|^2 + \frac{F}{2}{\psi}^4 \right) \d r_* \d^2\omega := \mathbb{E}_{\Sigma_0}^{\partial_t}[\psi;3].
\end{equation}

Applying Assertion~$(iii)$ of Theorem~1 in \cite{Yang}, we derive the following pointwise decay for the solution $\psi$ to \eqref{nonlinearequation}: for $1 < \gamma < 2$, there exists a positive constant $C = C(\gamma, M, \mathcal{E}_0)$ such that
\begin{equation}\label{pointwise}
|\psi|\leq \begin{cases}
    C\mathcal{E}_1^C\sqrt{\mathcal{E}_2}\tilde{v}^{-\frac{\gamma}{2}} & \hbox{   if  } 2M\leq r \leq 5M, \cr
    C\mathcal{E}_1^C\sqrt{\mathcal{E}_2}(rv)^{-\frac{1}{2}}(1+|u|)^{-\frac{\gamma-1}{2}} & \hbox{   if  } r>5M,
 \end{cases}
\end{equation}
where we define
\begin{equation}
\mathcal{E}_0 = \mathcal{E}_{\widetilde{\Sigma}_0}[\psi; 3], 
\qquad 
\mathcal{E}_k = \mathcal{E}_{\widetilde{\Sigma}_0}\!\left(Z^{\leq k}\psi\right),
\end{equation}
with $Z^k = Z_1 Z_2 \cdots Z_k$, where each $Z_j$ is a Killing vector field in the set
\[
\Gamma = \left\{ \partial_{\tilde v}, \ \Omega_{ij} = x_i \partial_j - x_j \partial_i \right\},
\]
and $x = r \omega = (x_1, x_2, x_3)$ in the coordinates $(\tilde{v}, r, \omega)$. 
In addition, the energy $\mathcal{E}_{\widetilde{\Sigma}_0}$ is given by (see \cite{Yang} for details):
$$\mathcal{E}_{\widetilde{\Sigma}_0} = \mathbb{E}_{\widetilde{\Sigma}_0} + \frac{1}{2}\int_{\widetilde{\Sigma}_0}F\psi^2\d r_* \d^2\omega.$$
However, the second term can in fact be controlled by the first one. Therefore, in this paper it is sufficient to use \eqref{EnEq}.

Furthermore, applying the energy flux decay results in Propositions~4.1 and~5.1 of \cite{Yang}, we find that for $1 < \gamma_0, \gamma < 2$ and $N > 2$,
\begin{equation}\label{EnergyDecay}
\mathcal{E}^{\partial_t}_{\mathcal{S}_\tau}[\psi;3] \lesssim (4+\tau^2)^{-\frac{\gamma_0}{2}} + (4+\tau^2)^{-\frac{N}{4}-\frac{\gamma + 1}{2}}\mathcal{E}_1^C\mathcal{E}_2.
\end{equation}
This yields
\begin{equation}\label{lim1}
\lim_{\tau \to +\infty}\mathcal{E}^{\partial_t}_{\mathcal{S}_\tau}[\psi;3] = 0.
\end{equation}

Combining \eqref{resenergy} and \eqref{EnEq} with the pointwise decay \eqref{pointwise}, the energy decay \eqref{EnergyDecay}, and the limit \eqref{lim1}, we obtain
\begin{eqnarray*}
\hat{\mathbb{E}}^{\partial_t}_{\mathcal{S}_T}[\hat\psi;3] &=& \hat{\mathbb{E}}^{\partial_t}_{\widetilde{\Sigma}_{T+(r_{FH})_*}}[\hat\psi;3] + \hat{\mathbb{E}}^{\partial_t}_{\mathcal{H}_T}[\hat\psi;3]\cr
&=& \int_{\widetilde{\Sigma}_{T+(r_{FH})_*}\cap \{ 2M\leq r< \frac{5M}{2} \}} \left( (\partial_{\tilde v}\hat{\psi})^2  +  R^2F |\nabla_{S^2}\hat\psi|^2 + \left( 2MR\hat{\psi}^2 + \hat{\psi}^4 \right)FR^2 \right) \d \tilde{v} \d^2\omega\cr
&&+ \hat{\mathbb{E}}^{\partial_t}_{{\Sigma}_{T+(r_{FH})_*}\cap \{ \frac{5M}{2}\leq r < r_{FH} \}}[\hat{\psi};3] + \int_{\mathcal{H}_T} \left( (\partial_v\hat{\psi})^2  +  R^2F |\nabla_{S^2}\hat\psi|^2 + R^2F\hat{\psi}^2 \right) \d v\d^2\omega\cr
&=& {\mathbb{E}}^{\partial_t}_{{\Sigma}_{T+(r_{FH})_*}\cap \{ \frac{5M}{2}\leq r < r_{FH} \}}[{\psi};3]\cr 
&&+ \int_{\widetilde{\Sigma}_{T+(r_{FH})_*}\cap \{ 2M\leq r< \frac{5M}{2} \}} \left( (\partial_{\tilde v}\hat{\psi})^2  +  R^2F |\nabla_{S^2}\hat\psi|^2 + \left( 2MR\hat{\psi}^2 + \hat{\psi}^4 \right)FR^2 \right) \d \tilde{v} \d^2\omega\cr
&&+ \int_{\mathcal{H}_T} \left( (\partial_v\hat{\psi})^2  +  R^2F |\nabla_{S^2}\hat\psi|^2 + R^2F\hat{\psi}^2 \right) \d v\d^2\omega\cr
&\leq& {\mathbb{E}}^{\partial_t}_{{\Sigma}_{T+(r_{FH})_*}\cap \{ \frac{5M}{2}\leq r < r_{FH} \}}[{\psi};3] + {\mathbb{E}}^{\partial_t}_{{\Sigma}_{T+(r_{FH})_*}\cap \{ 2M\leq r < \frac{5M}{2} \}}[{\psi};3] + {\mathbb{E}}^{\partial_t}_{\mathcal{H}_T}[\psi;3] \cr 
&&+ \int_{\widetilde{\Sigma}_{T+(r_{FH})_*}\cap \{ 2M\leq r< \frac{5M}{2} \}} (F^2+2MR)|\psi|^2 \d u \d^2\omega + \int_{\mathcal{H}_T} (F^2+ F)|\psi|^2 \d v\d^2\omega\cr
&\leq& \mathbb{E}^{\partial_t}_{\mathcal{S}_T}[\psi;3] + \int_{\widetilde{\Sigma}_{T+(r_{FH})_*}\cap \{ 2M\leq r< \frac{5M}{2} \}} (F^2+2MR)|\psi|^2 \d u \d^2\omega + \int_{\mathcal{H}_T} (F^2+ F)|\psi|^2 \d v\d^2\omega\cr
&\leq& \mathbb{E}^{\partial_t}_{\mathcal{S}_T}[\psi;3] + \int_{\widetilde{\Sigma}_{T+(r_{FH})_*}\cap \{ 2M\leq r< \frac{5M}{2} \}} (F^2+2MR) C^2\mathcal{E}_1^{2C}\mathcal{E}_2\tilde{v}^{-\gamma} \d u \d^2\omega\cr
&&+ \int_{\mathcal{H}_T} (F^2+ F)C^2\mathcal{E}_1^{2C}\mathcal{E}_2 (rv)^{-1}(1+|u|)^{-(\gamma-1)} \d v\d^2\omega\cr
&\lesssim& \mathbb{E}^{\partial_t}_{\mathcal{S}_T}[\psi;3] + 2\pi \int^{+\infty}_{T-(\frac{5M}{2})_*} (u+2r_*-\lambda(r))^{-\gamma}  \d u + 2\pi \int_{T+(r_{FH})_*}^{+\infty}(v-u|_{\mathcal{H}_T})^{-1} v^{-1}\d v\cr
&\lesssim& \mathbb{E}^{\partial_t}_{\mathcal{S}_T}[\psi;3] + 2\pi \int^{+\infty}_{T-(\frac{5M}{2})_*} \left(u+2(2M)_*- (\frac{5M}{2})_*\right)^{-\gamma}  \d u + 2\pi \int_{T+(r_{FH})_*}^{+\infty}(v-u|_{\mathcal{H}_T})^{-2}\d v\cr
&&\hbox{(for $T$ large enough)}\cr
&\lesssim& \mathcal{E}^{\partial_t}_{\mathcal{S}_T}[\psi;3] + \frac{2\pi}{\gamma-1} \left(T-(\frac{5M}{2})_*+2(2M)_*- (\frac{5M}{2})_*\right)^{1-\gamma} + 2\pi (T+(r_{FH})_*-u|_{\mathcal{H}_T})^{-1}\cr
&\longrightarrow& 0
\end{eqnarray*}
as $t\to +\infty$. Consequently, we have
\begin{equation}\label{limit2}
\lim_{T\to +\infty} \hat{\mathbb{E}}^{\partial_t}_{\mathcal{S}_T}[\hat{\psi};3] =0.
\end{equation}

With the aid of the limit \eqref{limit2}, we establish the energy identity up to $i^+$ in the following theorem.
\begin{theorem}\label{equality1}
Consider the Cauchy problem for the rescaled nonlinear wave equation \eqref{rescaledequation} with smooth and compactly supported initial data on $\Sigma_0 = \{ t = 0 \}$. 
We can define the energy fluxes of the rescaled solution $\hat{\psi}$ through the null conformal boundary $\mathfrak{H}^+ \cup \scri^+$ by
$$\hat{\mathbb{E}}^{\partial_t}_{\scri^+}[\hat\psi] + \hat{\mathbb{E}}^{\partial_t}_{\mathfrak{H}^+}[\hat\psi] = \lim_{T\rightarrow\infty} \left( \hat{\mathbb{E}}^{\partial_t}_{\scri_T^+}[\hat\psi] + \hat{\mathbb{E}}^{\partial_t}_{\mathfrak{H}_T^+}[\hat\psi] \right).$$
Furthermore, we have the following equality:
$$\hat{\mathbb{E}}^{\partial_t}_{\scri^+}[\hat\psi] + \hat{\mathbb{E}}^{\partial_t}_{\mathfrak{H}^+}[\hat\psi] = \hat{\mathbb{E}}^{\partial_t}_{\Sigma_0}[\hat{\psi};3].$$
\end{theorem}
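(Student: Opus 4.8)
The plan is to convert the limiting statement \eqref{limit2} into an exact identity by a Stokes' theorem argument on bounded regions, and then let the time parameter go to infinity.

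\emph{Step 1: an exact energy identity at finite $T$.} For $T$ large the hypersurface $\mathcal S_T=\mathcal N_T\cup\mathcal H_T$ lies to the future of $\Sigma_0$ and, together with $\Sigma_0$, bounds a spacetime region $\Omega_T\subset\bar{\mathcal B}_I$ whose remaining boundary consists of the portion $\mathfrak H^+_T$ of the future horizon and the portion $\scri^+_T$ of future null infinity swept out between the foot points of $\Sigma_0$ and of $\mathcal S_T$. I would apply \eqref{Stokesformula} to the one-form $\alpha=\hat{J}+V$: by the divergence equation \eqref{conser} one has $\d{*}\alpha=-\frac{1}{4}(\hat{\nabla}_a\alpha^a)\,\dvol_{\hat g}=0$, so Stokes' theorem over $\partial\Omega_T$ gives $\int_{\partial\Omega_T}{*}\alpha=0$. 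Decomposing $\partial\Omega_T$ into $\Sigma_0$, $\mathcal S_T$, $\mathfrak H^+_T$, $\scri^+_T$ with coherent (outgoing) orientations, and using that $V_a\hat{N}^a=0$ on the two null pieces (since $V\parallel\partial_t$ and $\partial_t$ is null along $\mathfrak H^+$ and $\scri^+$, as noted above), the definition \eqref{EN} of the energy fluxes yields
\begin{equation*}
\hat{\mathbb E}^{\partial_t}_{\Sigma_0}[\hat\psi;3]=\hat{\mathbb E}^{\partial_t}_{\mathcal S_T}[\hat\psi;3]+\hat{\mathbb E}^{\partial_t}_{\mathfrak H^+_T}[\hat\psi]+\hat{\mathbb E}^{\partial_t}_{\scri^+_T}[\hat\psi].
\end{equation*}
The point needing care is that $\Omega_T$ is not compact: $\partial\Omega_T$ has corners at spacelike infinity $i_0$ (where $\Sigma_0$ meets $\scri^+$) and at the bifurcation sphere $S^2_c$ (where $\Sigma_0$ meets $\mathfrak H^+$). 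Since the data are smooth and compactly supported in some $\{2M<r_1\le r\le r_2\}$, finite propagation speed makes $\hat\psi$ vanish in neighbourhoods of $i_0$ and of $S^2_c$ inside $\bar{\mathcal B}_I$, while the rescaled solution is otherwise smooth up to the conformal boundary; so one may carry out the integration on a compact exhaustion of $\Omega_T$ and then remove the truncations, with no boundary term surviving from these corners. I expect this to be the only genuinely delicate step.

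\emph{Step 2: the boundary fluxes are well defined.} Every term in the integrand of $\hat{\mathbb E}^{\partial_t}_{\mathcal S_T}[\hat\psi;3]$, read off from \eqref{resenergy}, is non-negative (sums of squares, together with $(2MR\hat\psi^2+\frac{1}{2}\hat\psi^4)FR^2\ge0$ and $R^2F\hat\psi^2\ge0$), so $\hat{\mathbb E}^{\partial_t}_{\mathcal S_T}[\hat\psi;3]\ge0$; the identity of Step 1 then gives $\hat{\mathbb E}^{\partial_t}_{\mathfrak H^+_T}[\hat\psi]+\hat{\mathbb E}^{\partial_t}_{\scri^+_T}[\hat\psi]\le\hat{\mathbb E}^{\partial_t}_{\Sigma_0}[\hat\psi;3]$ for all $T$. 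Moreover the integrands $(\partial_v\hat\psi)^2$ and $(\partial_u\hat\psi)^2$ are non-negative while the truncated boundaries $\mathfrak H^+_T$, $\scri^+_T$ increase with $T$, so both $T\mapsto\hat{\mathbb E}^{\partial_t}_{\mathfrak H^+_T}[\hat\psi]$ and $T\mapsto\hat{\mathbb E}^{\partial_t}_{\scri^+_T}[\hat\psi]$ are non-decreasing. Being monotone and bounded they converge, which proves that the limit defining $\hat{\mathbb E}^{\partial_t}_{\scri^+}[\hat\psi]+\hat{\mathbb E}^{\partial_t}_{\mathfrak H^+}[\hat\psi]$ exists.

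\emph{Step 3: passing to the limit.} Letting $T\to+\infty$ in the identity of Step 1, the left-hand side is independent of $T$, the term $\hat{\mathbb E}^{\partial_t}_{\mathcal S_T}[\hat\psi;3]$ tends to $0$ by \eqref{limit2} (this is exactly where the energy and pointwise decay of \cite{Yang} enter), and $\hat{\mathbb E}^{\partial_t}_{\mathfrak H^+_T}[\hat\psi]+\hat{\mathbb E}^{\partial_t}_{\scri^+_T}[\hat\psi]$ tends to $\hat{\mathbb E}^{\partial_t}_{\mathfrak H^+}[\hat\psi]+\hat{\mathbb E}^{\partial_t}_{\scri^+}[\hat\psi]$ by Step 2. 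This gives $\hat{\mathbb E}^{\partial_t}_{\scri^+}[\hat\psi]+\hat{\mathbb E}^{\partial_t}_{\mathfrak H^+}[\hat\psi]=\hat{\mathbb E}^{\partial_t}_{\Sigma_0}[\hat\psi;3]$, the asserted equality. Steps 2 and 3 are soft, using only positivity, monotonicity and \eqref{limit2}.
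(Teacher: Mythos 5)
Your proposal is correct and follows essentially the same route as the paper: the exact finite-$T$ energy identity from Stokes' theorem applied to the divergence-free current $\hat{J}+V$, monotonicity and boundedness of the null boundary fluxes to get existence of the limit, and the decay \eqref{limit2} to kill the $\mathcal{S}_T$ term. Your extra remark in Step 1 about handling the corners at $i_0$ and the crossing sphere via compact support and finite propagation speed is a sensible refinement that the paper leaves implicit.
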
 
\begin{proof} 
By integrating the divergence equation \eqref{conser} over the domain $\Omega$, which is closed by the Cauchy initial hypersurface $\Sigma_0$ and the hypersurfaces $\mathfrak{H}_T^+$, $\mathcal{S}_T$, and $\scri_T^+$ (for $T > 0$), and applying \eqref{Stokesformula}, we have
\begin{equation}\label{energyidentity}
\hat{\mathbb{E}}^{\partial_t}_{\Sigma_0}[\hat\psi;3] = \hat{\mathbb{E}}^{\partial_t}_{\scri_T^+}[\hat\psi;3] + \hat{\mathbb{E}}^{\partial_t}_{\mathfrak{H}_T^+}[\hat\psi;3] + \hat{\mathbb{E}}^{\partial_t}_{\mathcal{S}_T}[\hat\psi;3].
\end{equation}
Hence, the total energy flux across $\scri_T^+$ and $\mathfrak{H}_T^+$ is a nonnegative and increasing function of $T$. 
In view of \eqref{energyidentity} and the positivity of $\hat{\mathbb{E}}^{\partial_t}_{\mathcal{S}_T}[\hat{\psi}]$, it is bounded above by $\hat{\mathcal{E}}_{\Sigma_0}(\hat{\psi})$. 
Consequently, the limit of
$\hat{\mathbb{E}}^{\partial_t}_{\mathfrak{H}_T^+}(\hat{\psi}) + \hat{\mathbb{E}}^{\partial_t}_{\scri_T^+}(\hat{\psi})$
exists, and the following sum is well defined:
\begin{eqnarray}\label{limitenergy}
\hat{\mathbb{E}}^{\partial_t}_{\scri^+}[\hat\psi] + \hat{\mathbb{E}}^{\partial_t}_{\mathfrak{H}^+}[\hat\psi] &=& \lim_{T\rightarrow\infty}\left( \hat{\mathbb{E}}^{\partial_t}_{\scri_T^+}[\hat\psi;3] + \hat{\mathbb{E}}^{\partial_t}_{\mathfrak{H}_T^+}[\hat\psi]\right)\cr
&=& \hat{\mathbb{E}}^{\partial_t}_{\Sigma_0}[\hat\psi;3] - \lim_{T\rightarrow\infty}\hat{\mathbb{E}}^{\partial_t}_{\mathcal{S}_T}[\hat\psi;3]\cr
&=& \hat{\mathbb{E}}^{\partial_t}_{\Sigma_0}[\hat\psi;3]
\end{eqnarray}
The last equality follows from \eqref{limit2}.
\end{proof}
We now recall the Sobolev embedding on spacelike hypersurfaces in the following lemma (see \cite{Jo2012,NiXu2019} for details):
\begin{lemma}\label{Sobolev}
Consider the infinite half-cylinder $({\mathscr{H}_+} = (0,+\infty)\times S^2_\omega,h)$, where the metric $h$ is given by $h= \d x^2 + \d \omega^2$. Then, the embedding ${H}^1(\Sigma_0) \hookrightarrow L^6(\Sigma_0)$ holds, i.e.,
\begin{equation}
\left( \int_{{\mathscr{H}_+}} |\phi|^6 \d \mathrm{Vol}_{{\mathscr{H}_+}} \right)^{\frac{1}{3}} \leq C \int_{{\mathscr{H}_+}} \left( |\partial_x\phi|^2 + |\nabla_{S^2}\phi|^2 + |\phi|^2 \right) \d \mathrm{Vol}_{{\mathscr{H}_+}}.
\end{equation}
\end{lemma}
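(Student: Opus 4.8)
\emph{Proof plan.} This is the critical Sobolev embedding $H^{1}\hookrightarrow L^{6}$ on the complete, non-compact Riemannian $3$-manifold $(\mathcal{H}_+,h)$, the exponent $6=2^{*}=2n/(n-2)$ being the critical one in dimension $n=3$. Note that the zeroth-order term $\int|\phi|^{2}$ on the right-hand side is genuinely necessary: applied to functions of $x$ alone the inequality would force $\|f\|_{L^{6}(\R)}\lesssim\|f'\|_{L^{2}(\R)}$, which is false by scaling, so the proof must really use the $S^{2}$ directions. I would first reduce to the full cylinder $(\R\times S^{2},\,\d x^{2}+\d\omega^{2})$ by even reflection $\tilde\phi(x,\omega)=\phi(|x|,\omega)$, which lies in $H^{1}(\R\times S^{2})$ and multiplies each of $\int|\phi|^{6}$, $\int|\nabla\phi|^{2}$, $\int|\phi|^{2}$ by $2$; and, by density of $C_c^\infty$ in $H^{1}$ of this complete manifold (and since the lemma is in any case applied only to smooth compactly supported data), I assume $\phi\in C_c^\infty(\R\times S^{2})$. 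The plan is then the classical localisation--summation scheme for manifolds of bounded geometry: cut $\phi$ into pieces supported in charts uniformly bi-Lipschitz to Euclidean balls, apply the flat Gagliardo--Nirenberg--Sobolev inequality on each piece, and reassemble.

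For the localisation, cover $S^{2}$ by two ``fat hemispheres'' $U_{1},U_{2}$, each a bi-Lipschitz image of a fixed bounded disc of $\R^{2}$ under stereographic projection, with subordinate partition of unity $\eta_{1},\eta_{2}$, and cover $\R$ by the intervals $(k-1,k+1)$, $k\in\Z$, with subordinate partition of unity $\theta_{k}$, $|\theta_{k}'|\le C_{0}$. Then the sets $V_{k,j}:=(k-1,k+1)\times U_{j}$ cover $\R\times S^{2}$ with multiplicity at most $4$, each $V_{k,j}$ is bi-Lipschitz onto a bounded domain of $\R^{3}$ with constants \emph{independent of} $k$ (by translation invariance in $x$) and $j$ (only two values), and $\chi_{k,j}(x,\omega):=\theta_{k}(x)\eta_{j}(\omega)$ is a subordinate partition of unity with $|\nabla\chi_{k,j}|\le C_{1}$ uniformly. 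Transporting $\chi_{k,j}\phi$ to $\R^{3}$ through the chart, applying $\|u\|_{L^{6}(\R^{3})}\le C_{\mathrm{Sob}}\|\nabla u\|_{L^{2}(\R^{3})}$, transporting back (the bi-Lipschitz distortion being bounded), and using $\nabla(\chi_{k,j}\phi)=\chi_{k,j}\nabla\phi+\phi\,\nabla\chi_{k,j}$ with $|\nabla\chi_{k,j}|\le C_{1}$, one obtains a \emph{uniform} local estimate
\begin{equation*}
\|\chi_{k,j}\phi\|_{L^{6}(V_{k,j})}^{2}\le C_{2}\Big(\|\nabla\phi\|_{L^{2}(V_{k,j})}^{2}+\|\phi\|_{L^{2}(V_{k,j})}^{2}\Big),\qquad C_{2}\ \text{independent of}\ k,j,
\end{equation*}
where $|\nabla\phi|^{2}=|\partial_x\phi|^{2}+|\nabla_{S^2}\phi|^{2}$ with respect to $h$.

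For the summation, $|\phi|\le\sum_{k,j}|\chi_{k,j}\phi|$ together with the power-mean inequality and multiplicity $\le 4$ gives $\int|\phi|^{6}\le 4^{5}\sum_{k,j}\int|\chi_{k,j}\phi|^{6}$; cubing the local estimate, using $\sum_{k,j}a_{k,j}^{3}\le\big(\sum_{k,j}a_{k,j}\big)^{3}$ for nonnegative $a_{k,j}$, and invoking the bounded multiplicity once more, yields $\int|\phi|^{6}\lesssim\big(\|\nabla\phi\|_{L^{2}}^{2}+\|\phi\|_{L^{2}}^{2}\big)^{3}$; taking cube roots is the claim on $\R\times S^{2}$, and the reflection step then returns $\mathcal{H}_+$. \emph{The main obstacle is not analytic but bookkeeping}: checking that the bi-Lipschitz distortions, the bounds $|\nabla\chi_{k,j}|$ and the covering multiplicity can all be chosen uniformly, i.e. that $(\R\times S^{2},\d x^{2}+\d\omega^{2})$ has bounded geometry. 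This is elementary here (a product of a line with a compact manifold, invariant under translation along the line), but it is the one place where care is needed; once it is granted the rest is routine. Alternatively one may simply invoke the Sobolev embedding theorem on complete manifolds with bounded geometry (Aubin; see Hebey's monograph), or transfer the Euclidean inequality along the global conformal equivalence between $(\R\times S^{2},\d x^{2}+\d\omega^{2})$ and $\R^{3}\setminus\{0\}$ with the flat metric, at the price of carrying the conformal weight and the scalar-curvature term of the conformal Laplacian through the computation.
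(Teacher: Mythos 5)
Your proof is correct, but there is nothing in the paper to compare it against line by line: the paper's entire ``proof'' of this lemma is the single sentence ``The proof was given in \cite[Lemma 4.2]{NiXu2019}'', so the argument is delegated wholesale to that reference rather than given here. Your self-contained route --- even reflection to the full cylinder $\R\times S^2$, a uniformly locally finite cover by products of unit intervals with two fat hemispheres, the flat Gagliardo--Nirenberg--Sobolev inequality in each uniformly bi-Lipschitz chart, and resummation using the bounded overlap together with the superadditivity of $t\mapsto t^3$ --- is the standard bounded-geometry proof and is sound; your preliminary observation that the zeroth-order term $\int|\phi|^2$ cannot be dropped (scaling along the $x$-axis applied to functions of $x$ alone) is also correct and worth keeping, since it explains why the cylinder is not simply ``like $\R^3$''. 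The cited reference establishes the same embedding by essentially this localisation-and-summation scheme (slicing the half-cylinder into unit slabs and using a translation-invariant local constant), so your argument should be viewed as an expansion of the citation rather than a genuinely different method; what it buys is that the paper becomes self-contained at this point, at the cost of the chart bookkeeping you rightly identify as the only delicate part. If you write this out in full, the two steps worth making explicit are that the even reflection of an $H^1$ function on the half-cylinder lies in $H^1$ of the full cylinder (its weak $x$-derivative being the odd reflection), and that the extension of $\chi_{k,j}\phi$ by zero from the chart image to all of $\R^3$ lies in $H^1(\R^3)$ so that the scale-invariant inequality $\|u\|_{L^6(\R^3)}\leq C_{\mathrm{Sob}}\|\nabla u\|_{L^2(\R^3)}$ applies; both are routine.
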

\begin{proof}
The proof was given in \cite[Lemma 4.2]{NiXu2019}.
\end{proof}
Clearly, we have
$|\hat{\psi}|^4 \le \frac{1}{2}\left( |\hat{\psi}|^6 + |\hat{\psi}|^2 \right)$.
Hence, the Sobolev embedding established in Lemma~\ref{Sobolev} allows us to control the term $|\hat{\psi}|^4$ appearing in the energy formula on spacelike hypersurfaces.
By using the Sobolev embedding $H^1(\Sigma_0) \hookrightarrow L^6(\Sigma_0)$ and Theorem~\ref{equality1}, we obtain the following corollary.
\begin{corollary}\label{1}
Setting
\begin{equation}\label{normEn}
\hat{\mathbb{E}}^{\partial_t}_{\Sigma_0}[\hat{\psi}] =\frac{1}{2}\int_{\Sigma_0}\left((\partial_t{\hat\psi})^2 + r^2 (\partial_{r_*}{\hat\psi})^2 + R^2F|\nabla_{S^2}\hat\psi|^2 \right) \d r_* \d^2\omega,
\end{equation}
we have
\begin{equation}\label{ineq1}
\hat{\mathbb{E}}^{\partial_t}_{\scri^+}[\hat\psi] + \hat{\mathbb{E}}^{\partial_t}_{\mathfrak{H}^+}[\hat\psi] \leq \left(\left(\hat{\mathbb{E}}^{\partial_t}_{\Sigma_0}[\hat{\psi}]\right)^2 +1\right)\hat{\mathbb{E}}^{\partial_t}_{\Sigma_0}[\hat{\psi}].
\end{equation}
The reverse inequality of \eqref{ineq1} is clearly valid in the following sense:
\begin{equation}\label{ineq2}
\hat{\mathbb{E}}^{\partial_t}_{\Sigma_0}[\hat{\psi}] \leq \hat{\mathbb{E}}^{\partial_t}_{\scri^+}[\hat\psi] + \hat{\mathbb{E}}^{\partial_t}_{\mathfrak{H}^+}[\hat\psi] .
\end{equation}
\end{corollary}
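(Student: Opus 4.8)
The plan is to derive both estimates directly from the energy identity of Theorem~\ref{equality1}, which reads $\hat{\mathbb{E}}^{\partial_t}_{\scri^+}[\hat\psi] + \hat{\mathbb{E}}^{\partial_t}_{\mathfrak{H}^+}[\hat\psi] = \hat{\mathbb{E}}^{\partial_t}_{\Sigma_0}[\hat\psi;3]$, combined with the elementary observation that the full flux $\hat{\mathbb{E}}^{\partial_t}_{\Sigma_0}[\hat\psi;3]$ and the truncated one $\hat{\mathbb{E}}^{\partial_t}_{\Sigma_0}[\hat\psi]$ of \eqref{normEn} differ only by the lower-order block
\begin{equation*}
\hat{\mathbb{E}}^{\partial_t}_{\Sigma_0}[\hat\psi;3] - \hat{\mathbb{E}}^{\partial_t}_{\Sigma_0}[\hat\psi] \;=\; \frac12\int_{\Sigma_0}\Bigl(2MR\,\hat\psi^2 + \tfrac12\hat\psi^4\Bigr)FR^2\,\d r_*\,\d^2\omega ,
\end{equation*}
which is manifestly nonnegative and involves only the weights $FR^2$ and $2MR^3F$, both bounded on the exterior $\{r>2M\}$. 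The lower bound \eqref{ineq2} is then immediate, since $\hat{\mathbb{E}}^{\partial_t}_{\Sigma_0}[\hat\psi]\leq\hat{\mathbb{E}}^{\partial_t}_{\Sigma_0}[\hat\psi;3]=\hat{\mathbb{E}}^{\partial_t}_{\scri^+}[\hat\psi]+\hat{\mathbb{E}}^{\partial_t}_{\mathfrak{H}^+}[\hat\psi]$.

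For the upper bound \eqref{ineq1} I would estimate this block from above by a cubic polynomial in $\hat{\mathbb{E}}^{\partial_t}_{\Sigma_0}[\hat\psi]$. Because the two weights are bounded, it suffices to control $\int_{\Sigma_0}\hat\psi^2$ and $\int_{\Sigma_0}\hat\psi^4$; using the pointwise inequality $\hat\psi^4\leq\tfrac12(\hat\psi^6+\hat\psi^2)$ this reduces to $\int_{\Sigma_0}\hat\psi^6$ and $\int_{\Sigma_0}\hat\psi^2$. The sextic integral is exactly what the Sobolev embedding of Lemma~\ref{Sobolev} controls (applied on a half-cylinder containing the compact support of $\hat\psi|_{\Sigma_0}$): it gives $\int_{\Sigma_0}\hat\psi^6\lesssim\|\hat\psi\|_{\mathcal{H}^1(\Sigma_0)}^6$, and trivially $\int_{\Sigma_0}\hat\psi^2\leq\|\hat\psi\|_{\mathcal{H}^1(\Sigma_0)}^2$. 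Granting the comparison $\|\hat\psi\|_{\mathcal{H}^1(\Sigma_0)}^2\lesssim\hat{\mathbb{E}}^{\partial_t}_{\Sigma_0}[\hat\psi]$ discussed below, the block is then $\lesssim\bigl(\hat{\mathbb{E}}^{\partial_t}_{\Sigma_0}[\hat\psi]\bigr)^3+\hat{\mathbb{E}}^{\partial_t}_{\Sigma_0}[\hat\psi]$; since $E^2\leq E^3+E$ for every $E\geq 0$, this yields $\hat{\mathbb{E}}^{\partial_t}_{\Sigma_0}[\hat\psi;3]\lesssim\bigl(\bigl(\hat{\mathbb{E}}^{\partial_t}_{\Sigma_0}[\hat\psi]\bigr)^2+1\bigr)\hat{\mathbb{E}}^{\partial_t}_{\Sigma_0}[\hat\psi]$, and Theorem~\ref{equality1} turns this into \eqref{ineq1}.

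The one point that needs care — and the main obstacle — is the comparison $\|\hat\psi\|_{\mathcal{H}^1(\Sigma_0)}^2\lesssim\hat{\mathbb{E}}^{\partial_t}_{\Sigma_0}[\hat\psi]$. The truncated energy \eqref{normEn} carries the degenerate angular weight $R^2F$, which vanishes both at the horizon $r=2M$ and at spacelike infinity, and it has no zeroth-order term, so it does not dominate the full $\mathcal{H}^1$-norm for arbitrary functions on $\Sigma_0$. This is exactly where the hypothesis that the Cauchy data be smooth and compactly supported enters: $\hat\psi|_{\Sigma_0}$ is then supported in a fixed compact annulus $\{a\leq r\leq b\}\subset\{r>2M\}$, on which $r^2$ and $R^2F$ are bounded below by positive constants, giving $\int_{\Sigma_0}\bigl(|\partial_{r_*}\hat\psi|^2+|\nabla_{S^2}\hat\psi|^2\bigr)\lesssim\hat{\mathbb{E}}^{\partial_t}_{\Sigma_0}[\hat\psi]$, while a one-dimensional Poincaré inequality in $r_*$ (valid since $\hat\psi$ vanishes at the endpoints of its $r_*$-range) bounds $\int_{\Sigma_0}\hat\psi^2$ by $\int_{\Sigma_0}(\partial_{r_*}\hat\psi)^2$, hence by $\hat{\mathbb{E}}^{\partial_t}_{\Sigma_0}[\hat\psi]$. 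The implied constants depend on the support of the data but not on its amplitude, which is harmless for the subsequent construction of the trace and scattering operators.
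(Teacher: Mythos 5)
Your argument is essentially the paper's: the authors dispose of this corollary in one line by combining the energy identity of Theorem~\ref{equality1} with the Sobolev embedding of Lemma~\ref{Sobolev} and the pointwise bound $\hat\psi^4\le\tfrac12(\hat\psi^6+\hat\psi^2)$ (misprinted in the text as $\hat\psi^4\le\tfrac12(\hat\psi^4+\hat\psi^2)$), which is exactly the route you take, and your write-up is if anything more detailed than theirs. The one substantive point is the step you yourself flag: bounding $\int_{\Sigma_0}\hat\psi^2$ by the truncated energy via a Poincar\'e inequality on the support of the data produces a constant depending on that support, so what you actually establish is \eqref{ineq1} with a support-dependent prefactor; the paper's constant-free polynomial suffers from the same unacknowledged defect (the Sobolev constant of Lemma~\ref{Sobolev} already cannot be absorbed into the stated form), but the support-dependence is not as harmless as you assert, since \eqref{ineq1} is later used to extend the trace operator $\mathbb{T}^+$ by density to the completion $\mathcal{H}$, where the supports of an approximating sequence need not remain in a fixed compact set.
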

To establish the well-posedness of equation \eqref{rescaledequation}, we require energy estimates for the field on the Cauchy hypersurfaces $\Sigma_0$ and $\Sigma_t$.
\begin{theorem}\label{equality2}
Consider the Cauchy problem for the rescaled nonlinear wave equation \eqref{rescaledequation} with smooth and compactly supported initial data on $\Sigma_0 = \{ t = 0 \}$. 
Then the following energy estimates hold:
\begin{equation}\label{ineq3}
\hat{\mathbb{E}}^{\partial_t}_{\Sigma_0}[\hat{\psi}] \leq \left(\left(\hat{\mathbb{E}}^{\partial_t}_{\Sigma_0}[\hat{\psi}]\right)^4 + 2\left(\hat{\mathbb{E}}^{\partial_t}_{\Sigma_0}[\hat{\psi}]\right)^2 + 2 \right) \hat{\mathbb{E}}^{\partial_t}_{\Sigma_t}[\hat{\psi}]
\end{equation}
and
\begin{equation}\label{ineq4}
\hat{\mathbb{E}}^{\partial_t}_{\Sigma_t}[\hat{\psi}] \leq \left(\left(\hat{\mathbb{E}}^{\partial_t}_{\Sigma_0}[\hat{\psi}]\right)^2+1\right) \hat{\mathbb{E}}^{\partial_t}_{\Sigma_0}[\hat{\psi}].
\end{equation}
\end{theorem}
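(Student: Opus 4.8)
The plan is to combine three observations, each a consequence of material already established. \textbf{(1) Conservation of the cubic energy between $\Sigma_0$ and $\Sigma_t$.} Integrating the divergence identity \eqref{conser} over the region of $\mathcal{B}_I$ bounded by $\Sigma_0=\{t=0\}$ and $\Sigma_t=\{t=\mathrm{const}\}$ and applying \eqref{Stokesformula} gives $\hat{\mathbb{E}}^{\partial_t}_{\Sigma_0}[\hat\psi;3]=\hat{\mathbb{E}}^{\partial_t}_{\Sigma_t}[\hat\psi;3]$ up to contributions from the corners of that region. In the Schwarzschild exterior every slice $\{t=\mathrm{const}\}$ emanates from the bifurcation sphere $S_c^2$ and runs to $i_0$, so the region between $\Sigma_0$ and $\Sigma_t$ has no lateral null boundary, only the corners at $S_c^2$ and $i_0$; since the initial data are smooth and compactly supported, finite speed of propagation keeps the support of $\hat\psi$ on every intermediate slice $\{t=t'\}$, $0\le t'\le t$, inside a compact subset of $\{2M<r<+\infty\}$, so there is no corner contribution. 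Hence
\[
\hat{\mathbb{E}}^{\partial_t}_{\Sigma_0}[\hat\psi;3]=\hat{\mathbb{E}}^{\partial_t}_{\Sigma_t}[\hat\psi;3].
\]
(Alternatively: $\mathcal{I}^+(\Sigma_t)$ has the same future conformal boundary $\mathfrak{H}^+\cup\scri^+$ as $\mathcal{I}^+(\Sigma_0)$ and the decay estimates of \cite{Yang} are invariant under the flow of the Killing field $\partial_t$, so Theorem \ref{equality1} applied with $\Sigma_0$ replaced by $\Sigma_t$ gives the same identity.) \textbf{(2) The energy norm is dominated by the cubic energy on any spacelike slice $\Sigma$:} the cubic energy exceeds the energy norm \eqref{normEn} by the manifestly non-negative quadratic and quartic potential integrals (recall $F>0$ and $R>0$ on $\mathcal{B}_I$), so $\hat{\mathbb{E}}^{\partial_t}_{\Sigma}[\hat\psi]\le\hat{\mathbb{E}}^{\partial_t}_{\Sigma}[\hat\psi;3]$. \textbf{(3) The reverse bound on a spacelike slice.} For $\Sigma_0$ this is exactly the combination of Theorem \ref{equality1} with \eqref{ineq1} of Corollary \ref{1}, namely $\hat{\mathbb{E}}^{\partial_t}_{\Sigma_0}[\hat\psi;3]=\hat{\mathbb{E}}^{\partial_t}_{\scri^+}[\hat\psi]+\hat{\mathbb{E}}^{\partial_t}_{\mathfrak{H}^+}[\hat\psi]\le\bigl((\hat{\mathbb{E}}^{\partial_t}_{\Sigma_0}[\hat\psi])^2+1\bigr)\hat{\mathbb{E}}^{\partial_t}_{\Sigma_0}[\hat\psi]$; running the same argument on $\Sigma_t$ — the $\Sigma_t$ version of Theorem \ref{equality1} together with the Sobolev embedding $\mathcal{H}^1(\Sigma_t)\hookrightarrow L^6(\Sigma_t)$ of Lemma \ref{Sobolev}, used exactly as in Corollary \ref{1} to absorb the quartic term $\int_{\Sigma_t}\hat\psi^4$ — yields $\hat{\mathbb{E}}^{\partial_t}_{\Sigma_t}[\hat\psi;3]\le\bigl((\hat{\mathbb{E}}^{\partial_t}_{\Sigma_t}[\hat\psi])^2+1\bigr)\hat{\mathbb{E}}^{\partial_t}_{\Sigma_t}[\hat\psi]$.

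Given these, \eqref{ineq4} follows at once from the chain $\hat{\mathbb{E}}^{\partial_t}_{\Sigma_t}[\hat\psi]\le\hat{\mathbb{E}}^{\partial_t}_{\Sigma_t}[\hat\psi;3]=\hat{\mathbb{E}}^{\partial_t}_{\Sigma_0}[\hat\psi;3]\le\bigl((\hat{\mathbb{E}}^{\partial_t}_{\Sigma_0}[\hat\psi])^2+1\bigr)\hat{\mathbb{E}}^{\partial_t}_{\Sigma_0}[\hat\psi]$, using (2) on $\Sigma_t$, then (1), then (3) on $\Sigma_0$. For \eqref{ineq3} I would run the dual chain
\[
\hat{\mathbb{E}}^{\partial_t}_{\Sigma_0}[\hat\psi]\ \le\ \hat{\mathbb{E}}^{\partial_t}_{\Sigma_0}[\hat\psi;3]\ =\ \hat{\mathbb{E}}^{\partial_t}_{\Sigma_t}[\hat\psi;3]\ \le\ \bigl((\hat{\mathbb{E}}^{\partial_t}_{\Sigma_t}[\hat\psi])^2+1\bigr)\,\hat{\mathbb{E}}^{\partial_t}_{\Sigma_t}[\hat\psi],
\]
using (2) on $\Sigma_0$, (1), and (3) on $\Sigma_t$, and then substitute \eqref{ineq4} for the copy of $\hat{\mathbb{E}}^{\partial_t}_{\Sigma_t}[\hat\psi]$ sitting inside the coefficient $(\hat{\mathbb{E}}^{\partial_t}_{\Sigma_t}[\hat\psi])^2+1$, leaving the last factor $\hat{\mathbb{E}}^{\partial_t}_{\Sigma_t}[\hat\psi]$ untouched on the right. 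Bounding from above the polynomial in $\hat{\mathbb{E}}^{\partial_t}_{\Sigma_0}[\hat\psi]$ that then appears in the coefficient produces $\bigl(\hat{\mathbb{E}}^{\partial_t}_{\Sigma_0}[\hat\psi]\bigr)^4+2\bigl(\hat{\mathbb{E}}^{\partial_t}_{\Sigma_0}[\hat\psi]\bigr)^2+2$, which is \eqref{ineq3}.

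I expect two steps to need the most care. The first is justifying that the region between $\Sigma_0$ and $\Sigma_t$ genuinely contributes nothing at $S_c^2$ and $i_0$: a routine finite-speed-of-propagation argument, but it must be written down, together with the geometric fact that $\{t=\mathrm{const}\}$ hypersurfaces pinch off at $S_c^2$ and $i_0$ rather than crossing $\mathfrak{H}^+$. The second, more delicate, is the Sobolev step on $\Sigma_t$ in (3): one has to present $\Sigma_t$ conformally as (a piece of) the half-cylinder $\mathcal{H}_+=(0,+\infty)\times S^2_\omega$ of Lemma \ref{Sobolev}, track the weights $FR^2$ and $FR^3$ carried by the potential terms so that the embedding constant is absorbed, and control $\int_{\Sigma_t}\hat\psi^4$ by interpolating between the $L^2$ and $L^6$ norms that the energy already bounds — precisely the mechanism underlying Corollary \ref{1} for $\Sigma_0$, transplanted to $\Sigma_t$. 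No new idea is needed there, but the bookkeeping that yields the exact coefficient in \eqref{ineq3} (rather than merely some polynomial bound) has to be carried out carefully. Everything else is the elementary algebra of chaining (1)--(3) and inserting \eqref{ineq4}.
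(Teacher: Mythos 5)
Your proposal is correct and follows essentially the same route as the paper: conservation of the cubic energy between $\Sigma_0$ and $\Sigma_t$ via \eqref{conser} and \eqref{Stokesformula}, domination of the energy norm $\hat{\mathbb{E}}^{\partial_t}_{\Sigma}[\hat{\psi}]$ by the cubic energy $\hat{\mathbb{E}}^{\partial_t}_{\Sigma}[\hat{\psi};3]$, and the Sobolev embedding of Lemma \ref{Sobolev} applied on both slices, chained in exactly the order the paper uses. The one caveat --- shared by the paper's own proof --- is that this chain literally yields the degree-six coefficient $\bigl(\bigl(\hat{\mathbb{E}}^{\partial_t}_{\Sigma_0}[\hat{\psi}]\bigr)^2+1\bigr)^2\bigl(\hat{\mathbb{E}}^{\partial_t}_{\Sigma_0}[\hat{\psi}]\bigr)^2+1$ rather than the stated $\bigl(\hat{\mathbb{E}}^{\partial_t}_{\Sigma_0}[\hat{\psi}]\bigr)^4+2\bigl(\hat{\mathbb{E}}^{\partial_t}_{\Sigma_0}[\hat{\psi}]\bigr)^2+2$, a discrepancy that is immaterial for the subsequent applications.
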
 
\begin{proof}
By integrating the divergence equation \eqref{conser} over the domain $\widetilde{\Omega}$, which is closed by the Cauchy hypersurfaces $\Sigma_0$ and $\Sigma_t$, and applying \eqref{Stokesformula}, we have
\begin{equation}
\hat{\mathbb{E}}^{\partial_t}_{\Sigma_0}[\hat{\psi};3] = \hat{\mathbb{E}}^{\partial_t}_{\Sigma_t}[\hat{\psi};3].
\end{equation}
From the Sobolev embedding $H^1(\Sigma_t) \hookrightarrow L^6(\Sigma_t)$, it follows that 
\begin{eqnarray}
\hat{\mathbb{E}}^{\partial_t}_{\Sigma_0}[\hat{\psi}] &\leq& \hat{\mathbb{E}}^{\partial_t}_{\Sigma_t}[\hat{\psi};3] \leq \left(\left(\hat{\mathbb{E}}^{\partial_t}_{\Sigma_t}[\hat{\psi}]\right)^2+1\right) \hat{\mathbb{E}}^{\partial_t}_{\Sigma_t}[\hat{\psi}]\cr
&\leq& \left(\left(\hat{\mathbb{E}}^{\partial_t}_{\Sigma_0}[\hat{\psi};3]\right)^2+1\right) \hat{\mathbb{E}}^{\partial_t}_{\Sigma_t}[\hat{\psi}]\cr
&\leq& \left(\left(\hat{\mathbb{E}}^{\partial_t}_{\Sigma_0}[\hat{\psi}]\right)^4 + 2\left(\hat{\mathbb{E}}^{\partial_t}_{\Sigma_0}[\hat{\psi}]\right)^2 + 2 \right) \hat{\mathbb{E}}^{\partial_t}_{\Sigma_t}[\hat{\psi}].
\end{eqnarray}
The reverse inequality is obtained once more by applying the Sobolev embedding $H^1(\Sigma_0) \hookrightarrow L^6(\Sigma_0)$:
\begin{equation}
\hat{\mathbb{E}}^{\partial_t}_{\Sigma_t}[\hat{\psi}] \leq \left(\left(\hat{\mathbb{E}}^{\partial_t}_{\Sigma_0}[\hat{\psi}]\right)^2+1\right) \hat{\mathbb{E}}^{\partial_t}_{\Sigma_0}[\hat{\psi}].
\end{equation}

\end{proof}

\section{Cauchy and Goursat problems and scattering operator}\label{S4}
\subsection{Cauchy problem and an injective trace operator}
\begin{definition}
We denote by $\mathcal{H}(\Sigma_0)$ the completion of $\mathcal{C}_0^\infty(\Sigma_0)\times \mathcal{C}_0^\infty(\Sigma_0)$ with respect to the norm
\begin{eqnarray}
\left\|(\hat{\psi}_0,\hat{\psi}_1) \right\|_{\mathcal H(\Sigma_0)} &=& \frac{1}{\sqrt 2}\left(\int_{\Sigma_0} \left( (\hat{\psi}_1)^2 + (\partial_{r_*}\hat{\psi}_0)^2 + R^2F|\nabla_{S^2}\hat{\psi}_0|^2 \right.\right.\cr
&&\left.\left.\hspace{5cm} + (2MR+1)R^2F\hat{\psi}_0^2 \right) \d r_*\d^2\omega \right)^{1/2}.
\end{eqnarray}
An analogous definition holds for the space $\mathcal{H}(\Sigma_\tau)$ on the hypersurface $\Sigma_\tau = \{ t = \tau \}$.
\end{definition}

By extending the methods of \cite{CaCho, Pha2023} and using the energy estimates \eqref{ineq3}, \eqref{ineq4} and the energy decay property \eqref{limitenergy}, we establish the following well-posedness result for equation \eqref{rescaledequation} on $\bar{\mathcal{B}}_I$.
\begin{theorem}\label{CauchyProblem}
The Cauchy problem for the rescaled equation \eqref{rescaledequation} on $\bar{\mathcal{B}}_I$ is well-posed in the sense that, for any $(\hat{\psi}_0,\hat{\psi}_1) \in \mathcal{H}(\Sigma_0)$, there exists a unique solution $\hat{\psi} \in \mathcal{D}'(\bar{\mathcal{B}}_I)$ to \eqref{rescaledequation} such that
$$(\hat\psi,\partial_t\hat\psi) \in \mathcal{C}(\mathbb{R}_t;\cup_{t\in \mathbb{R}}\mathcal{H}_{\Sigma_t}): \, \hat\psi|_{\Sigma_0}=\hat{\psi}_0; \, \partial_t\hat\psi|_{\Sigma_0} = \hat{\psi}_1.$$
Moreover, $\hat{\psi}$ belongs to $H^1_{loc}(\bar{\mathcal B}_I)$\footnote{The Sobolev space $H^s_{loc}(\bar{\mathcal B}_I)\, (0\leq s<+\infty)$ on open sets is defined in \cite[Definition 2]{Ni2016}}.
\end{theorem}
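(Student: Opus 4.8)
The plan is to establish the well-posedness of the Cauchy problem for the rescaled equation \eqref{rescaledequation} by a standard energy/contraction argument adapted to the foliation $\{\Sigma_\tau\}$, using the two-sided energy estimates \eqref{ineq3}--\eqref{ineq4} of Theorem \ref{equality2} as the crucial ingredient that controls the defocusing cubic nonlinearity. First I would treat the \emph{local} problem: given $(\hat\psi_0,\hat\psi_1)\in\mathcal{H}(\Sigma_0)$, fix a small time interval $[0,T_0]$ and set up the Duhamel/fixed-point scheme for the linear operator $\Box_{\hat g}+2MR$, writing the nonlinearity $|\hat\psi|^2\hat\psi$ as a source term. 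The space to work in is $\mathcal{C}([0,T_0];\mathcal{H}(\Sigma_t))$; the Sobolev embedding $H^1(\Sigma_t)\hookrightarrow L^6(\Sigma_t)$ of Lemma \ref{Sobolev} lets one estimate $\||\hat\psi|^2\hat\psi\|_{L^2}\lesssim\|\hat\psi\|_{L^6}^3\lesssim\|\hat\psi\|_{\mathcal H}^3$, so the Duhamel map is a contraction on a ball of radius $\sim\|(\hat\psi_0,\hat\psi_1)\|_{\mathcal H}$ provided $T_0$ is small (depending only on that norm). This yields existence and uniqueness of a local solution in $\mathcal{C}([0,T_0];\mathcal H_{\Sigma_t})$ with $(\hat\psi,\partial_t\hat\psi)$ continuous in $t$; the same argument run backward in $t$ handles negative times.

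The next step is the \emph{globalisation} in $t$: I would show the local solution extends to all of $\mathbb{R}_t$ by ruling out blow-up of the energy norm in finite time. Here the two-sided estimate \eqref{ineq3}--\eqref{ineq4} is decisive. The exact conservation $\hat{\mathbb{E}}^{\partial_t}_{\Sigma_0}[\hat\psi;3]=\hat{\mathbb{E}}^{\partial_t}_{\Sigma_t}[\hat\psi;3]$ (from integrating the divergence identity \eqref{conser} between $\Sigma_0$ and $\Sigma_t$, which holds for any $t$ because the cubic term is defocusing so $V$ contributes a \emph{positive} flux), together with \eqref{ineq4}, gives $\hat{\mathbb{E}}^{\partial_t}_{\Sigma_t}[\hat\psi]\le\big((\hat{\mathbb{E}}^{\partial_t}_{\Sigma_0}[\hat\psi])^2+1\big)\hat{\mathbb{E}}^{\partial_t}_{\Sigma_0}[\hat\psi]$, a bound uniform in $t$ depending only on the initial data. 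Hence the $\mathcal{H}(\Sigma_t)$-norm of the solution stays bounded on every finite interval, so the local existence time $T_0$ does not shrink to zero, and a continuation argument produces the global solution $\hat\psi\in\mathcal{C}(\mathbb{R}_t;\cup_t\mathcal H_{\Sigma_t})$ with the prescribed Cauchy data.

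For the regularity claim $\hat\psi\in H^1_{\mathrm{loc}}(\bar{\mathcal B}_I)$ I would argue that on any relatively compact open subset of $\bar{\mathcal B}_I$ the solution, being in $\mathcal C(I;\mathcal H_{\Sigma_t})$ with $\partial_t\hat\psi\in\mathcal C(I;L^2)$, has all first-order derivatives (in $t$, $r_*$, and the angular variables) locally square-integrable with respect to $\dvol_{\hat g}$, since near $\scri^+$ and $\mathfrak{H}^+$ the coordinates $(t,r_*,\omega)$ degenerate in a controlled way and one passes to $(u,R,\omega)$ or $(v,R,\omega)$; the energy fluxes through the null boundary pieces, finite by Theorem \ref{equality1} and Corollary \ref{1}, supply the missing $L^2$ control of the transverse derivative up to the boundary. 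Finally, $\hat\psi\in\mathcal D'(\bar{\mathcal B}_I)$ solves \eqref{rescaledequation} in the distributional sense because it is a limit of smooth approximate solutions (approximating the data in $\mathcal H(\Sigma_0)$ by $\mathcal C_0^\infty\times\mathcal C_0^\infty$) and the nonlinear term converges in $L^1_{\mathrm{loc}}$ by the $L^6$-bound.

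The main obstacle I expect is making the contraction/continuation argument genuinely rigorous on the \emph{conformally compactified} manifold $\bar{\mathcal B}_I$ rather than on $\mathcal B_I$: the Cauchy hypersurfaces $\Sigma_t=\{t=\tau\}$ do not reach the horizon in a uniform way, so one must either work with the spacelike leaves $\widetilde\Sigma_\tau$ near $r=2M$ (on which $K=\partial_{\tilde v}$ is the relevant timelike field) and patch with $\partial_t$ for $r\ge 5M/2$, exactly as in the construction of $\mathcal S_\tau$, or invoke finite propagation speed to localise. Controlling the nonlinearity and the lower-order term $2MR\hat\psi$ uniformly up to $\mathfrak H^+$ in these adapted coordinates — and checking that the energy estimates \eqref{ineq3}--\eqref{ineq4}, proved for the $\Sigma_t$ foliation, transfer to the mixed spacelike/null leaves — is the technical heart of the matter; this is precisely where the cited methods of \cite{CaCho,Pha2023} are used.
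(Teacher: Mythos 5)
Your proposal is correct in substance and rests on the same key ingredients as the paper --- the two-sided energy estimates \eqref{ineq3}--\eqref{ineq4}, the Sobolev embedding of Lemma \ref{Sobolev} to tame the cubic term, and the method of \cite{CaCho} --- but it is organised quite differently. You run a direct local-existence (Duhamel contraction) plus energy-continuation argument on the physical slices $\Sigma_t$, with the conservation of $\hat{\mathbb{E}}^{\partial_t}_{\Sigma_t}[\hat\psi;3]$ and the defocusing sign supplying the uniform-in-$t$ bound needed to continue; this is essentially an unpacking of what \cite{CaCho} provides. The paper instead cuts off neighbourhoods $\mathcal{O}$ of $i^+$ and $\mathcal{V}$ of $i_0$ lying beyond the support of the data, embeds $(\bar{\mathcal{B}}_I-\mathcal{O}-\mathcal{V},\hat g)$ as a bounded, finite-in-time domain of a globally hyperbolic cylinder $(\mathfrak{M},\mathfrak{g})$ (as in the proof of Theorem 1 of \cite{Pha2023}), applies the Cagnac--Choquet-Bruhat scheme on that extension, where the leaves are compact and uniformly spacelike, and then recovers $\hat\psi$ by restriction using local uniqueness, causality and finite propagation speed. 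That embedding step is precisely what disposes of the difficulty you flag at the end --- the degeneration of the $\{t=\mathrm{const}\}$ foliation at the horizon and at null infinity and the need to patch with the $\widetilde\Sigma_\tau$ leaves --- so in the paper this issue never has to be confronted on $\bar{\mathcal{B}}_I$ itself. Your route is more self-contained and makes explicit where the nonlinearity and the defocusing sign enter; the paper's route absorbs all the delicate boundary geometry into one conformal extension, at the price of justifying that extension and of first working with compactly supported data (the passage to general data in $\mathcal{H}(\Sigma_0)$ by density being implicit in both arguments).
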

\begin{proof}
We consider arbitrary neighbourhoods $\mathcal{O}$ of future timelike infinity $i^+$ and $\mathcal{V}$ of spacelike infinity $i_0$, lying outside the support of the initial data. The domain $(\bar{\mathcal{B}}_I \setminus (\mathcal{O} \cup \mathcal{V}), \hat{g})$ can be embedded into a globally hyperbolic cylindrical spacetime $(\mathfrak{M}, \mathfrak{g})$ as a spatially bounded and temporally finite domain (see, for instance, \cite{Pha2023}, Theorem 1).

Applying the energy estimates \eqref{ineq3} and \eqref{ineq4}, and following the approach of \cite{CaCho}, we obtain well-posedness for equation \eqref{rescaledequation} in the extended spacetime $(\mathfrak{M}, \mathfrak{g})$, with solution $\widetilde{\psi}$. By local uniqueness and causality, and using in particular finite propagation speed, it follows that equation \eqref{rescaledequation} is also well posed on $(\bar{\mathcal{B}}_I \setminus (\mathcal{O} \cup \mathcal{V}), \hat{g})$, with solution given by the restriction $\hat{\psi} = \widetilde{\psi}|_{\bar{\mathcal{B}}_I \setminus (\mathcal{O} \cup \mathcal{V})}$.
\end{proof}

The well-posedness established in Theorem \ref{CauchyProblem} allows us to define the trace operator on the conformal boundaries as follows:
\begin{definition}(Trace operator). Let $(\hat{\psi}_0, \hat{\psi}_1)\in \mathcal{C}_0^\infty(\Sigma_0)\times \mathcal{C}_0^\infty(\Sigma_0)$. Consider the solution of equation \eqref{rescaledequation}, $\hat{\psi} \in \mathcal{C}^\infty(\bar{\mathcal{B}_I})$ such that
$$\hat{\psi}|_{\Sigma_0} = \hat{\psi}_0, \, \partial_t \hat{\psi}_1|_{\Sigma_0} = \hat{\psi}_1.$$
We define the trace operator $\mathbb{T}^+$ from $\mathcal{C}_0^\infty(\Sigma_0)\times \mathcal{C}_0^\infty(\Sigma_0)$ to $\mathcal{C}_0^\infty(\mathfrak{H}^+)\times \mathcal{C}_0^\infty(\scri^+)$ as follows:
$$\mathbb{T}^+(\hat{\psi}_0, \hat{\psi}_1) = (\hat{\psi}|_{\mathfrak{H}^+}, \hat{\psi}|_{\scri^+}).$$
\end{definition}
\begin{figure}[H]
\begin{center}
\includegraphics[scale=0.7]{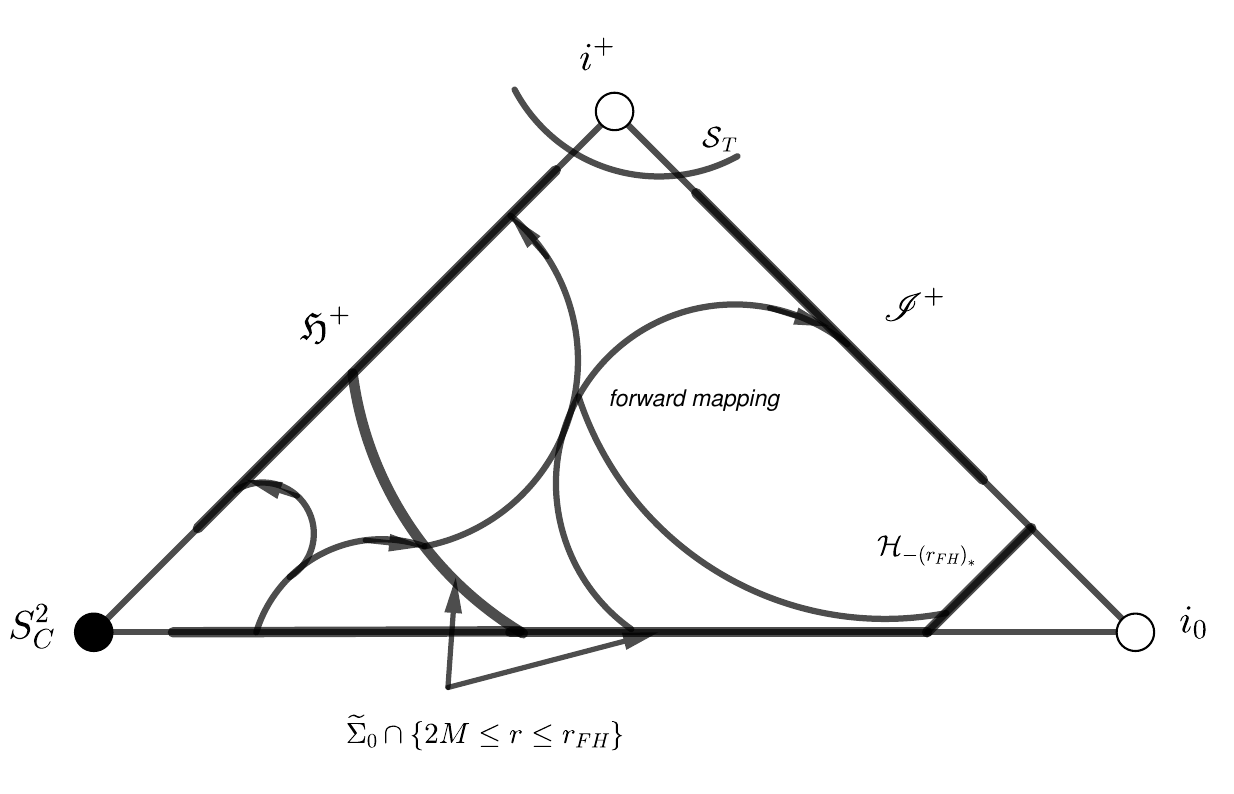}
\caption{Forward mapping from data on $\Sigma_0$}
\end{center}
\end{figure}
\begin{theorem}\label{injective}
The trace operator $\mathbb{T}^+$ is injective.
\end{theorem}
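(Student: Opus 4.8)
The plan is to show that the trace operator separates points by exploiting the two-sided energy estimates already established, together with the finite propagation speed and a backward uniqueness argument. Since $\mathbb{T}^+$ is (at least formally) linear on smooth compactly supported data in the sense that solutions of \eqref{rescaledequation} are determined by their restriction to $\Sigma_0$, it suffices to prove that if $\mathbb{T}^+(\hat\psi_0,\hat\psi_1)=(0,0)$ then $(\hat\psi_0,\hat\psi_1)=(0,0)$. So let $\hat\psi$ be the solution furnished by the Cauchy problem (Theorem \ref{CauchyProblem}) with data $(\hat\psi_0,\hat\psi_1)\in\mathcal{C}_0^\infty(\Sigma_0)\times\mathcal{C}_0^\infty(\Sigma_0)$, and assume $\hat\psi|_{\mathfrak{H}^+}=0$ and $\hat\psi|_{\scri^+}=0$. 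Then the energy flux of $\hat\psi$ through the future null boundary vanishes:
\begin{equation*}
\hat{\mathbb{E}}^{\partial_t}_{\scri^+}[\hat\psi] + \hat{\mathbb{E}}^{\partial_t}_{\mathfrak{H}^+}[\hat\psi] = 0.
\end{equation*}

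First I would invoke the reverse energy inequality \eqref{ineq2} of Corollary \ref{1}, which gives
\begin{equation*}
\hat{\mathbb{E}}^{\partial_t}_{\Sigma_0}[\hat\psi] \leq \hat{\mathbb{E}}^{\partial_t}_{\scri^+}[\hat\psi] + \hat{\mathbb{E}}^{\partial_t}_{\mathfrak{H}^+}[\hat\psi] = 0,
\end{equation*}
hence $\hat{\mathbb{E}}^{\partial_t}_{\Sigma_0}[\hat\psi]=0$. Reading off the definition \eqref{normEn} of this energy, the integrand is a sum of nonnegative terms $(\partial_t\hat\psi)^2 + r^2(\partial_{r_*}\hat\psi)^2 + R^2F|\nabla_{S^2}\hat\psi|^2$ on $\Sigma_0$, so each must vanish identically. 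In particular $\partial_t\hat\psi|_{\Sigma_0}=\hat\psi_1=0$, and $\partial_{r_*}\hat\psi|_{\Sigma_0}=0$ together with $\nabla_{S^2}\hat\psi|_{\Sigma_0}=0$ force $\hat\psi_0$ to be constant on the connected hypersurface $\Sigma_0$; since $\hat\psi_0\in\mathcal{C}_0^\infty(\Sigma_0)$ is compactly supported, that constant is $0$. Therefore $(\hat\psi_0,\hat\psi_1)=(0,0)$, and by uniqueness in the Cauchy problem $\hat\psi\equiv 0$ on all of $\bar{\mathcal B}_I$. This proves $\mathbb{T}^+$ is injective on $\mathcal{C}_0^\infty(\Sigma_0)\times\mathcal{C}_0^\infty(\Sigma_0)$, and by continuity of both $\mathbb{T}^+$ and the energy norms it extends to injectivity on the energy space $\mathcal H(\Sigma_0)$.

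The main subtlety — and the step I would be most careful about — is the passage from "vanishing flux through $\mathfrak{H}^+\cup\scri^+$" to "vanishing flux through $\Sigma_0$." The clean way is precisely via \eqref{ineq2}; but one should check that the $L^6$/Sobolev control used to obtain \eqref{ineq1}–\eqref{ineq2} in Corollary \ref{1} does not degrade when the energy is small, i.e.\ that \eqref{ineq2} really is the stated unconditional inequality rather than one with a solution-dependent constant blowing up. Here \eqref{ineq2} is the \emph{easy} direction (the $|\hat\psi|^4$ term only helps, it carries a favourable sign in \eqref{EnEq}), so no Sobolev estimate is actually needed for it — the flux through $\Sigma_0$ with the quartic term dropped is bounded by the full conserved flux, which by Theorem \ref{equality1} equals the flux through the null boundary. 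An alternative, more self-contained route, avoiding Corollary \ref{1} altogether, is to run a direct backward uniqueness / energy argument: integrate the divergence identity \eqref{conser} over the region bounded by $\Sigma_0$, $\mathfrak{H}_T^+$, $\scri_T^+$ and $\mathcal{S}_T$ as in the proof of Theorem \ref{equality1}, use that the fluxes through $\mathfrak{H}_T^+$ and $\scri_T^+$ vanish (their integrands are $(\partial_v\hat\psi)^2$ and $(\partial_u\hat\psi)^2$, which are zero since $\hat\psi$ vanishes on those null hypersurfaces), and use the positivity of $\hat{\mathbb{E}}^{\partial_t}_{\mathcal{S}_T}[\hat\psi;3]$ to conclude $\hat{\mathbb{E}}^{\partial_t}_{\Sigma_0}[\hat\psi;3]\le 0$, hence $=0$. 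Either way the conclusion is the same; I would present the first route for brevity, citing \eqref{ineq2}.
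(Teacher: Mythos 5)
There is a genuine gap at the very first step: the reduction of injectivity to ``trivial kernel'' is not valid here, because $\mathbb{T}^+$ is \emph{not} a linear operator. Equation \eqref{rescaledequation} contains the cubic term $|\hat\psi|^2\hat\psi$, so the solution map from Cauchy data to solutions is nonlinear, and the difference of two solutions is not itself a solution. Consequently, showing that $\mathbb{T}^+(\hat\psi_0,\hat\psi_1)=(0,0)$ forces $(\hat\psi_0,\hat\psi_1)=(0,0)$ — which is all your argument establishes, and which it does establish correctly via \eqref{ineq2} and the positivity of the integrand in \eqref{normEn} — does not prove that $\mathbb{T}^+(\hat\psi_0,\hat\psi_1)=\mathbb{T}^+(\widetilde\psi_0,\widetilde\psi_1)$ forces $(\hat\psi_0,\hat\psi_1)=(\widetilde\psi_0,\widetilde\psi_1)$. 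Your parenthetical ``at least formally linear'' flags the issue but does not resolve it; for a nonlinear map the two statements are genuinely inequivalent.

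The paper's proof avoids this by working directly with two solutions $\hat\psi$ and $\widetilde\psi$ whose traces on $\mathfrak{H}^+\cup\scri^+$ coincide. Their difference satisfies the \emph{linear} wave equation \eqref{Lipschitz} with a potential $\hat\psi^2+\hat\psi\widetilde\psi+\widetilde\psi^2$ depending on both solutions, and the two-sided energy estimates \eqref{enLip1}--\eqref{enLip4} (adapted from Joudioux's Theorem 3.5, applied separately on the region near the horizon and the region $r\geq \frac{5M}{2}$) control the energy of $\hat\psi-\widetilde\psi$ on the initial surface by its energy on the null boundary, which vanishes. Note that this difference-based argument is also what delivers the locally Lipschitz property of $\mathbb{T}^+$ and of its inverse, used in Theorems \ref{Trace} and \ref{Goursat}; your route, even restricted to the zero-trace case, would not yield those estimates. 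To repair your proof you would need to replace the kernel argument by an energy estimate for $\hat\psi-\widetilde\psi$, i.e.\ essentially reconstruct the paper's argument.
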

\begin{proof}
Assume that there exists another initial data $(\widetilde{\psi}_0, \widetilde{\psi}_1)$ such that equation \eqref{rescaledequation} admits a solution $\widetilde{\psi}$ and
$\mathbb{T}^+(\widetilde{\psi}_0,\widetilde{\psi}_1) = (\hat{\psi}|_{\mathfrak{H}^+}, \hat{\psi}|_{\scri^+})$.
Then the difference $\hat{\psi} - \widetilde{\psi}$ corresponding to the initial data $(\hat{\psi}_0 - \widetilde{\psi}_0, \hat{\psi}_1 - \widetilde{\psi}_1)$ satisfies the following nonlinear wave equation.
\begin{equation}\label{Lipschitz}
\Box_{\hat g}(\hat\psi - \widetilde{\psi}) + 2MR(\hat\psi - \widetilde{\psi}) + (\hat\psi^2 + \hat{\psi}\widetilde{\psi}+ \widetilde{\psi}^2)(\hat\psi - \widetilde{\psi}) = 0.
\end{equation}
Noting that the restriction of $\hat{\psi} - \widetilde{\psi}$ to the conformal boundary $\mathfrak{H}^+ \cup \mathscr{I}^+$ is identically zero.

As in the proof of Theorem \ref{CauchyProblem}, we consider equation \eqref{Lipschitz} on a bounded and temporally finite domain of the globally hyperbolic cylindrical spacetime $(\mathfrak{M}, \mathfrak{g})$. We assume that $\bar{\mathcal{B}}_I \setminus (\mathcal{O} \cup \mathcal{V})$ is extended to a domain $\mathfrak{B}$, which is foliated by $\{\mathcal{U}_\tau\}_{0 \le \tau \le T'}$ (where $\mathcal{U}\tau$ is the extension of $\mathcal{S}_\tau$), and that the null conformal boundary $\mathfrak{H}^+ \cup \mathscr{I}^+$ is extended to $\mathcal{C}^+$. Note that the time variable is bounded, $\tau \le T$, since we have cut off the neighbourhood $\mathcal{O}$ of $i^+$ by the hypersurface $\mathcal{S}_T$ (with $0 < T < T'$), which yields a temporally finite foliated domain. Therefore, proceeding as in the proof of Theorem 3.5 in \cite{Jo2012}, we obtain that
\begin{equation}\label{enLip1}
\hat{\mathbb{E}}^{\partial_t}_{\mathcal{C}^+} [\hat\psi-\widetilde\psi]\leq C_1 \hat{\mathbb{E}}^{\partial_t}_{\mathcal{U}_0}[\hat\psi-\widetilde\psi]
\end{equation}
as well as the reverse inequality
\begin{equation}\label{enLip2}
\hat{\mathbb{E}}^{\partial_t}_{\mathcal{U}_0}[\hat\psi-\widetilde\psi] \leq C_2\hat{\mathbb{E}}^{\partial_t}_{\mathcal{C}^+} [\hat\psi-\widetilde\psi],
\end{equation}
where the constants $C_1$ and $C_2$ depend on the energies $\hat{\mathbb{E}}^{\partial_t}_{\mathcal{U}_0}[\hat\psi]$ and $\hat{\mathbb{E}}^{\partial_t}_{\mathcal{U}_0}[\widetilde\psi]$. 
The estimate \eqref{enLip2}, together with the vanishing of $\hat{\psi} - \widetilde{\psi}$ on the extension null boundary $\mathcal{C}^+$, implies that
$\hat{\psi}_0 = \widetilde{\psi}_0$ and $\hat{\psi}_1 = \widetilde{\psi}_1$ on the extended hypersurface $\mathcal{U}_0$. By restriction, the same equalities hold on $\mathcal{S}_0$. Moreover, we may choose $r_{FH}$ sufficiently large so that the point $(0, r_{FH}, \omega^2)$ lies outside the support of the initial data.

Therefore, it remains to prove the equality of the initial data on $\Sigma_0$ for $2M \le r \le \dfrac{5M}{2}$, given that
$\hat{\psi} = \widetilde{\psi} \quad \text{on} \quad \mathfrak{H}^+ \cup \bigl(\widetilde{\Sigma}_0 \cap \{ 2M \le r \le \tfrac{5M}{2} \} \bigr)$.
Since we have a bounded domain enclosed by the hypersurfaces $\Sigma_0$, $\mathfrak{H}^+$, and $\widetilde{\Sigma}_0 \cap \{ 2M \le r \le \tfrac{5M}{2} \}$, which can be embedded into a globally hyperbolic cylindrical spacetime as a finite domain, we can extend the proof of \cite[Theorem 3.5]{Jo2012} to establish the following two-sided energy estimates.
\begin{equation}\label{enLip3}
\hat{\mathbb{E}}^{\partial_t}_{\Sigma_0\cap \{ 2M\leq r\leq \frac{5M}{2}\}}[\hat{\psi}-\widetilde{\psi}] \leq C_3 \left( \left( \hat{\mathbb{E}}^{\partial_t}_{\mathfrak{H}^+}[\hat{\psi}-\widetilde{\psi}]\right)^3 + \hat{\mathbb{E}}^{\partial_t}_{\mathfrak{H}^+}[\hat{\psi}-\widetilde{\psi}] + \hat{\mathbb{E}}^{\partial_t}_{\widetilde{\Sigma}_0 \cap \{2M\leq r\leq \frac{5M}{2}\})}[\hat{\psi}-\widetilde{\psi};3] \right)
\end{equation}
and the reverse estimate
\begin{eqnarray}\label{enLip4}
\hat{\mathbb{E}}^{\partial_t}_{\mathfrak{H}^+\cap \{ v\leq (\frac{5M}{2})_*\}}[\hat{\psi}-\widetilde{\psi}] + \hat{\mathbb{E}}^{\partial_t}_{\widetilde{\Sigma}_0\cap \{ 2M\leq r\leq \frac{5M}{2}\}}[\hat{\psi}-\widetilde{\psi}] &\leq& C_4 \left( \left( \hat{\mathbb{E}}^{\partial_t}_{\Sigma_0 \cap \{2M\leq r\leq \frac{5M}{2} \}}[\hat{\psi}-\widetilde{\psi}]\right)^3 \right.\cr
&&\left.\hspace{2cm}+ \hat{\mathbb{E}}^{\partial_t}_{\Sigma_0\cap \{2M\leq r\leq \frac{5M}{2} \}}[\hat{\psi}-\widetilde{\psi}]  \right).
\end{eqnarray}
Finally, the energy estimate \eqref{enLip3} yields the equalities
$\hat{\psi}_0 = \widetilde{\psi}_0 \quad \text{and} \quad \hat{\psi}_1 = \widetilde{\psi}_1$
on $\Sigma_0$ for $2M \le r \le \dfrac{5M}{2}$. The proof is therefore complete.
\end{proof}

We now define the scattering data space:
\begin{definition}
The scattering data space $\mathcal{H}^+$ is defined as the completion of 
$\mathcal{C}_0^\infty(\mathfrak{H}^+) \times \mathcal{C}_0^\infty(\scri^+)$ with respect to the norm
$$\left\| (\xi,\zeta) \right\|_{\mathcal{H}^+} = \left(\int_{\mathfrak{H}^+}(\partial_v\xi)^2 \d v\d^2\omega + \int_{\scri^+}(\partial_u\zeta)^2 \d u \d^2\omega \right)^{1/2}.$$
This means that
$$\mathcal{H}^+ \simeq \dot{H}^1(\mathbb{R}_v; L^2(S^2_\omega)) \times \dot{H}^1(\mathbb{R}_u; L^2(S^2_\omega)).$$
\end{definition}
\begin{theorem}\label{Trace}
The trace operator extends uniquely to a bounded linear map from $\mathcal{H}$ to $\mathcal{H}^+$. The extended operator is locally Lipschitz.
\end{theorem}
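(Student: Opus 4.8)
The plan is to extend $\mathbb{T}^+$ from the dense subset $\mathcal{C}_0^\infty(\Sigma_0)\times\mathcal{C}_0^\infty(\Sigma_0)$ to all of $\mathcal{H}$ by a density/continuity argument, using the two-sided energy estimates already in hand to control the extension. First I would record that on smooth compactly supported data the energy flux through $\mathfrak{H}^+\cup\scri^+$ is finite and satisfies, by Corollary \ref{1},
\begin{equation}\label{traceest1}
\left\| \mathbb{T}^+(\hat\psi_0,\hat\psi_1) \right\|_{\mathcal{H}^+}^2 = \hat{\mathbb{E}}^{\partial_t}_{\scri^+}[\hat\psi] + \hat{\mathbb{E}}^{\partial_t}_{\mathfrak{H}^+}[\hat\psi] \leq \left(\left(\hat{\mathbb{E}}^{\partial_t}_{\Sigma_0}[\hat\psi]\right)^2+1\right)\hat{\mathbb{E}}^{\partial_t}_{\Sigma_0}[\hat\psi] = \left(\left\|(\hat\psi_0,\hat\psi_1)\right\|_{\mathcal{H}}^2+1\right)\left\|(\hat\psi_0,\hat\psi_1)\right\|_{\mathcal{H}}^2.
\end{equation}
This already shows $\mathbb{T}^+$ maps bounded sets to bounded sets; since the estimate is quadratic in the norm rather than linear, boundedness is in the sense "bounded on bounded sets" (the genuine linearity is recovered separately — see below).

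The second step is the local Lipschitz estimate, which is what actually allows the continuous extension. Given two data sets $(\hat\psi_0,\hat\psi_1),(\widetilde\psi_0,\widetilde\psi_1)\in\mathcal{C}_0^\infty(\Sigma_0)^2$ lying in a fixed ball of radius $\rho$ in $\mathcal{H}$, let $\hat\psi,\widetilde\psi$ be the corresponding solutions given by Theorem \ref{CauchyProblem}. Their difference $w=\hat\psi-\widetilde\psi$ solves the linear-in-$w$ equation \eqref{Lipschitz}, $\Box_{\hat g}w + 2MRw + (\hat\psi^2+\hat\psi\widetilde\psi+\widetilde\psi^2)w = 0$, with a potential term bounded (via the Sobolev embedding of Lemma \ref{Sobolev} and the $L^6$-control of $\hat\psi,\widetilde\psi$ in terms of their $\mathcal{H}$-norms $\le\rho$) uniformly on the ball. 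Running the energy identity for $w$ exactly as in Theorem \ref{equality1} — integrating the corresponding divergence identity over the region bounded by $\Sigma_0$, $\mathfrak{H}^+_T$, $\scri^+_T$, $\mathcal{S}_T$, and using that the flux through $\mathcal{S}_T$ is nonnegative and tends to $0$ (the decay input from \cite{Yang} applies to $w$ since $w$ is itself a solution of a defocusing-type equation with a benign bounded potential, or alternatively one absorbs the cubic term into the $\mathcal{S}_T$-flux and passes to the limit as in the computation preceding \eqref{limit2}) — yields
\begin{equation}\label{traceest2}
\left\| \mathbb{T}^+(\hat\psi_0,\hat\psi_1) - \mathbb{T}^+(\widetilde\psi_0,\widetilde\psi_1) \right\|_{\mathcal{H}^+}^2 = \hat{\mathbb{E}}^{\partial_t}_{\scri^+}[w] + \hat{\mathbb{E}}^{\partial_t}_{\mathfrak{H}^+}[w] \leq C(\rho)\,\left\|(\hat\psi_0-\widetilde\psi_0,\hat\psi_1-\widetilde\psi_1)\right\|_{\mathcal{H}}^2,
\end{equation}
where $C(\rho)$ depends only on $\rho$ through the $L^6$-bounds on $\hat\psi,\widetilde\psi$. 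This is the local Lipschitz property on smooth data. Since $\mathcal{C}_0^\infty(\Sigma_0)^2$ is dense in $\mathcal{H}$ and $\mathcal{H}^+$ is complete, the uniform continuity furnished by \eqref{traceest2} on each ball extends $\mathbb{T}^+$ uniquely to a map $\mathcal{H}\to\mathcal{H}^+$ which is still locally Lipschitz (the constant $C(\rho)$ is preserved in the limit), and \eqref{traceest1} persists on the extension.

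The assertion that the extended operator is \emph{linear} requires a short additional remark: although the PDE \eqref{rescaledequation} is nonlinear, the claim is presumably that $\mathbb{T}^+$ is linear on the energy space in the weak/distributional sense in which the scattering construction is phrased — more precisely, after the extension one shows that the graph of $\mathbb{T}^+$ is closed and, on the dense set, agrees with the trace of the unique solution, so that the standard argument identifying $\mathbb{T}^+$ with a bounded operator applies; in the defocusing setting the relevant identification follows from the uniqueness in Theorem \ref{CauchyProblem} together with \eqref{traceest1}–\eqref{traceest2}. I would state this carefully rather than claim strict additivity of the solution map. The main obstacle is the passage to the limit $\mathcal{S}_T$-flux $\to 0$ for the \emph{difference} $w$: one must check that the decay results of \cite{Yang}, or the direct integral estimates reproduced before \eqref{limit2}, genuinely apply to $w$ and not only to a single defocusing solution. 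I expect this to work because the extra potential $(\hat\psi^2+\hat\psi\widetilde\psi+\widetilde\psi^2)$ has the right sign up to a controllable error and is uniformly bounded in $L^\infty_{\mathrm{loc}}$ with the same pointwise decay rates as in \eqref{pointwise}, so the same dominated-convergence argument closes; making this rigorous is the technical heart of the proof.
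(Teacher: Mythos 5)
Your overall architecture (density of smooth compactly supported data, a local Lipschitz estimate for the difference of two solutions, continuous extension to $\mathcal{H}$) matches the intent of the theorem, and you correctly identify the difference equation \eqref{Lipschitz} as the object to estimate. However, the route you propose for the key Lipschitz estimate has a genuine gap, and it is precisely the one you flag yourself without closing. You want to run the global divergence identity for $w=\hat\psi-\widetilde\psi$ over the region bounded by $\Sigma_0$, $\mathfrak{H}^+_T$, $\scri^+_T$ and $\mathcal{S}_T$, and then let $T\to\infty$ using $\hat{\mathbb{E}}^{\partial_t}_{\mathcal{S}_T}[w]\to 0$. But the vanishing of the $\mathcal{S}_T$-flux in the paper rests on the pointwise decay \eqref{pointwise} and the energy decay \eqref{EnergyDecay} of \cite{Yang}, which are proved for \emph{solutions of the defocusing semilinear equation}, not for differences of two such solutions. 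The computation preceding \eqref{limit2} explicitly inserts the pointwise bounds $|\psi|\lesssim \tilde v^{-\gamma/2}$, resp.\ $(rv)^{-1/2}(1+|u|)^{-(\gamma-1)/2}$, for the field being estimated; no analogous bounds are available for $w$, which solves a linear equation with the time-dependent potential $\hat\psi^2+\hat\psi\widetilde\psi+\widetilde\psi^2$. The sign of that potential gives you coercivity of an energy, not decay of the flux through $\mathcal{S}_T$, so "the same dominated-convergence argument" does not close as stated. Since the entire Lipschitz constant $C(\rho)$ hinges on this limit, the estimate is not established.

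The paper avoids this issue by taking a different route: the two-sided difference estimates \eqref{enLip1} and \eqref{enLip4} are obtained in the proof of Theorem \ref{injective} by cutting off neighbourhoods of $i^+$ and $i_0$ away from the support of the data, embedding $\bar{\mathcal{B}}_I-\mathcal{O}-\mathcal{V}$ into a globally hyperbolic cylinder, and extending the finite-domain energy estimates of \cite[Theorem 3.5]{Jo2012} for the difference of two solutions; Theorem \ref{Trace} is then a one-line consequence of \eqref{enLip1} and \eqref{enLip4}. That approach trades the decay input for a compactness/finite-propagation-speed argument and never needs $\hat{\mathbb{E}}^{\partial_t}_{\mathcal{S}_T}[w]\to 0$. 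If you want to keep your global-identity strategy you would have to first prove decay (or at least flux decay along $\mathcal{S}_T$) for $w$ itself, which is a nontrivial addition. Two smaller remarks: your first estimate, the boundedness on bounded sets via Corollary \ref{1}, is consistent with the paper and fine; and your caution about the word "linear" is well taken --- the solution map of \eqref{rescaledequation} is not additive, and the correct statement is that $\mathbb{T}^+$ is a locally Lipschitz (hence continuous and bounded on bounded sets) nonlinear operator.
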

\begin{proof}
The locally Lipschitz continuity of $\mathbb{T}^+$ is a consequence of the estimates \eqref{enLip1} and \eqref{enLip4} derived in the proof of Theorem~\ref{injective}.
\end{proof}

\subsection{Goursat problem and conformal scattering operator}
In order to obtain the conformal scattering operator, we need to show that the trace operator is surjective. This amounts to solving the Goursat problem for equation \eqref{rescaledequation} with initial data prescribed on the conformal boundary $\mathfrak{H}^+ \cup \scri^+$ (respectively, $\mathfrak{H}^- \cup \scri^-$).
As a starting point, Hörmander \cite{Ho1990} solved the Goursat problem for second-order scalar linear wave equations with first-order regular potentials on spatially compact spacetimes. Subsequently, Nicolas \cite{Ni2006} extended Hörmander’s results to establish the well-posedness of the Goursat problem for linear wave equations with continuous coefficients in the first-order terms and locally $L^\infty$-coefficients in the zeroth-order terms, on slightly more regular metrics. Moreover, Joudioux \cite{Jo2012,Jo2019} treated the Goursat problem for nonlinear wave equations on asymptotically simple spacetimes.
The Goursat problem has also been studied in other contexts, for instance for Dirac and Maxwell equations in \cite{MaNi2004,Mo2019,Pha2022,Pha22}, and for scalar, vectorial, and spinor wave equations in \cite{Ni2016,Pha2020,Pha22,Pha2024}.
In the following, we apply and extend the results of \cite{Jo2012,Jo2019} to establish the well-posedness of the Goursat problem for equation \eqref{rescaledequation} on $(\bar{\mathcal{B}}_I, \hat{g})$.
\begin{theorem}\label{Goursat}
The Goursat problem for equation \eqref{rescaledequation} is well-posed, that is, for any initial data 
$(\xi, \zeta) \in C_0^\infty(\mathfrak{H}^+) \times C_0^\infty(\scri^+)$
and any spacelike foliation $\{ \mathcal{W}_t \}_{t}$ of $\bar{\mathcal{B}}_I$, there exists a unique solution to \eqref{rescaledequation} such that
$$(\hat\psi,\partial_t\hat\psi)\in \mathcal{C}(\mathbb{R}_+;\cup_{t\geq 0}\mathcal{H}(\mathcal{W}_t)) \hbox{  and  } \mathbb{T}^+(\hat\psi|_{\Sigma_0}, \partial_t \hat\psi|_{\Sigma_0}) = (\xi,\zeta).$$
Consequently, the trace operator $\mathbb{T}^+ : \mathcal{H} \to \mathcal{H}^+$ is surjective. Furthermore, the inverse operator $(\mathbb{T}^+)^{-1}$ extends uniquely to a bounded linear and locally Lipschitz map from $\mathcal{H}^+$ to $\mathcal{H}$.
\end{theorem}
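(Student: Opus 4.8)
The plan is to solve the Goursat problem by the standard strategy of Hörmander--Nicolas--Joudioux: excise neighbourhoods of the singular points, reduce to a spatially compact globally hyperbolic model, solve there, and then glue back. First I would fix the data $(\xi,\zeta)\in C_0^\infty(\mathfrak{H}^+)\times C_0^\infty(\scri^+)$ and choose a neighbourhood $\mathcal{O}$ of $i^+$ and a neighbourhood $\mathcal{V}$ of $i_0$ that are disjoint from the (compact) supports of $\xi$ and $\zeta$; since these supports are bounded away from $i^+$ and $i_0$, such neighbourhoods exist. On $\bar{\mathcal{B}}_I-\mathcal{O}-\mathcal{V}$ the null boundary piece carrying the data is the compact hypersurface $\mathcal{C}^+=(\mathfrak{H}^+\cup\scri^+)\setminus(\mathcal{O}\cup\mathcal{V})$, and as in the proof of Theorem \ref{CauchyProblem} this truncated region embeds into a spatially compact, finite-in-time globally hyperbolic spacetime $(\mathfrak{M},\mathfrak{g})$ as a bounded-and-finite domain $\mathfrak{B}$ foliated by $\{\mathcal{U}_\tau\}_{0\le\tau\le T'}$, with $\mathcal{C}^+$ extending to a characteristic hypersurface of $\mathfrak{M}$.

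Next I would extend $(\xi,\zeta)$ by zero to a smooth compactly supported datum on the extended characteristic surface and invoke the well-posedness of the Goursat problem for the semilinear equation \eqref{rescaledequation} on $(\mathfrak{M},\mathfrak{g})$ — this is exactly the setting of \cite{Jo2012,Jo2019}, whose results apply since the linear part is a regular second-order hyperbolic operator (the zeroth-order potential $2MR$ is smooth and bounded on the truncated region) and the nonlinearity $|\hat\psi|^2\hat\psi$ is defocusing and energy-subcritical with respect to $H^1(\Sigma)\hookrightarrow L^6(\Sigma)$, which is precisely Lemma \ref{Sobolev}. This produces a solution $\widetilde\psi\in\mathcal{C}(\mathbb{R}_+;\cup_t\mathcal{H}(\mathcal{U}_t))$ on $\mathfrak{B}$ realising the data. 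Restricting, $\hat\psi:=\widetilde\psi|_{\bar{\mathcal{B}}_I-\mathcal{O}-\mathcal{V}}$ solves \eqref{rescaledequation} there; by finite propagation speed and uniqueness in the backward domain of dependence, shrinking $\mathcal{O}$ and $\mathcal{V}$ (equivalently letting the cut-off parameters $T\to\infty$ and the $i_0$-neighbourhood shrink) gives a solution on all of $\bar{\mathcal{B}}_I$ with the prescribed trace, so $\mathbb{T}^+$ is onto. Boundedness and local Lipschitz continuity of $(\mathbb{T}^+)^{-1}$ follow from the two-sided energy estimates \eqref{enLip1}--\eqref{enLip2} and the boundary piece estimates \eqref{enLip3}--\eqref{enLip4}, read in the reverse direction: \eqref{enLip2} controls $\hat{\mathbb{E}}^{\partial_t}_{\mathcal{U}_0}$ by $\hat{\mathbb{E}}^{\partial_t}_{\mathcal{C}^+}$, hence $\hat{\mathbb{E}}^{\partial_t}_{\Sigma_0}[\hat\psi]$ by $\|(\xi,\zeta)\|_{\mathcal{H}^+}$ away from $r\le 5M/2$, and \eqref{enLip3} together with the evolution estimate \eqref{ineq3} handles the region $2M\le r\le 5M/2$; applying the same estimates to the difference of two solutions, whose difference solves the linearised-with-cubic-remainder equation \eqref{Lipschitz}, yields the Lipschitz bound with constants depending only on the $\mathcal{H}^+$-norms of the two data.

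The main obstacle I expect is the same one that made the injectivity argument delicate: the hypersurface $\mathcal{S}_\tau$ (and hence $\mathcal{U}_\tau$) is only \emph{partly} null — spacelike for $2M\le r\le r_{FH}$ and null for $r\ge r_{FH}$ — and the data live on the genuinely null boundary $\mathfrak{H}^+\cup\scri^+$, so one must carefully control the piece of the boundary near the bifurcation sphere $S_c^2$ where the horizon meets the spacelike part of the foliation, ensuring that the Goursat data on $\mathfrak{H}^+$ for small $v$ together with the (trivially zero, by compact support) data on $\widetilde\Sigma_0\cap\{2M\le r\le 5M/2\}$ determine the evolution into the pocket $2M\le r\le 5M/2$. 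This is precisely the content of \eqref{enLip3}--\eqref{enLip4}, and the point is that those estimates must be established as genuine \emph{a priori} bounds for the Goursat problem on this mixed spacelike/null data surface, not merely invoked; the nonlinear terms are absorbed exactly as in the remark following Lemma \ref{Sobolev}, using $|\hat\psi|^4\le\frac12(|\hat\psi|^4+|\hat\psi|^2)$ and the Sobolev embedding on the spacelike leaves, which is why the estimates come out cubic in the energies. A secondary technical point is checking that the extension $(\mathfrak{M},\mathfrak{g})$ can be chosen so that the nonlinearity and potential extend smoothly and the defocusing sign is preserved, so that the global-in-the-model well-posedness of \cite{Jo2012,Jo2019} genuinely applies; this is routine but must be stated.
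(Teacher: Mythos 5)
Your overall strategy --- excise neighbourhoods of $i^+$ and $i_0$ away from the supports of $\xi$ and $\zeta$, embed the truncated region into a spatially compact globally hyperbolic cylinder, invoke the nonlinear Goursat results of \cite{Jo2012,Jo2019}, restrict back, and extract boundedness and local Lipschitz continuity of $(\mathbb{T}^+)^{-1}$ from the two-sided energy estimates --- is the same as the paper's. But there is a genuine gap in how you reach the near-horizon region, and it is precisely where the paper has to do extra work. The results of \cite{Jo2012,Jo2019} are formulated for characteristic data on a single null hypersurface (a light cone, or $\scri^+$ of an asymptotically simple spacetime); here the future boundary is the union of the two null pieces $\mathfrak{H}^+$ and $\scri^+$, and one cannot simply ``invoke'' those theorems on the whole truncated region in one step. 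The paper's device is to introduce the intermediate ingoing null hypersurface $\mathcal{L}=\{v=(5M/2)_*\}$ and solve \emph{two successive} Goursat problems: first a solution $\hat\psi^a$ on $\{v\ge (5M/2)_*\}$ with data $(\xi|_{v\ge (5M/2)_*},\zeta)$, which is the step where \cite{Jo2019} applies; then a solution $\hat\psi^b$ on the pocket $\{v\le (5M/2)_*\}$ with characteristic data $(\hat\psi^a|_{\mathcal{L}},\,\xi|_{v\le (5M/2)_*})$ posed on the two intersecting null hypersurfaces $\mathcal{L}$ and $\mathfrak{H}^+\cap\{v\le (5M/2)_*\}$. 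The global solution is the union of $\hat\psi^a$ and $\hat\psi^b$.

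Your treatment of the pocket is where the argument breaks. You propose to determine the evolution there from the Goursat data on $\mathfrak{H}^+$ for small $v$ together with what you call the ``trivially zero, by compact support'' data on $\widetilde\Sigma_0\cap\{2M\le r\le 5M/2\}$. The restriction of the backward solution to $\widetilde\Sigma_0\cap\{2M\le r\le 5M/2\}$ is not a datum: it is an unknown of the problem, it is certainly not zero (compact support of $\xi,\zeta$ on the null boundary says nothing about the solution on an interior spacelike surface), and treating it as given makes the argument circular. What is actually available on the future boundary of the pocket is the trace $\hat\psi^a|_{\mathcal{L}}$, which only exists once the first Goursat problem has been solved; this is why the two-step decomposition along $\mathcal{L}$ is not a cosmetic choice but the missing idea. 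Your use of the estimates \eqref{enLip1}--\eqref{enLip4} (applied to differences of solutions satisfying \eqref{Lipschitz}) for the Lipschitz continuity of $(\mathbb{T}^+)^{-1}$ is otherwise consistent with the paper, which likewise obtains it from the energy estimates for $\hat\psi^a$ and $\hat\psi^b$ as in Theorem \ref{injective}.
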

\begin{proof}
Let $\widetilde{\mathcal{S}}_0$ be a hypersurface which coincides with $\mathcal{S}_0$ for $r \ge \frac{5M}{2}$ and consists of the null hypersurface
$\mathcal{L} = \left\{ v = \left( \frac{5M}{2} \right)_* \right\}$.
We split the well-posedness analysis of the Goursat problem into the two domains
$\mathcal{I}^+(\Sigma_0) \cap \left\{ v \le \left( \frac{5M}{2} \right)_* \right\}
\quad \text{and} \quad
\mathcal{I}^+(\Sigma_0) \cap \left\{ v \ge \left( \frac{5M}{2} \right)_* \right\}$.

First, by applying the results of \cite[Theorem~3.2 and Proposition~6.1]{Jo2019} (see also \cite[Theorem~3.13]{Jo2012}), we obtain the well-posedness of the Goursat problem on 
$\mathcal{I}^+(\widetilde{\mathcal{S}}_0) \setminus (\mathcal{O} \cup \mathcal{V})$.
More precisely, for initial data 
$(\xi, \zeta) \in \mathcal{C}_0^\infty(\mathfrak{H}^+) \times \mathcal{C}_0^\infty(\scri^+)$,
where we cut off a neighbourhood $\mathcal{O}$ of $i^+$ by a spacelike hypersurface $\mathcal{S}_T$ lying sufficiently far from the support of the initial data (i.e.\ for $T<+\infty$ large enough), and cut-off a neighbourhood $\mathcal{V}$ of $i_0$, which is also far from the support of the initial data, equation \eqref{rescaledequation} admits a unique solution $\hat{\psi}^a$ satisfying
$$\hat{\psi}^a \in {\mathcal {C}}(\mathbb{R}_+;\, \cup_{\tau \geq 0}H^1(\mathcal{W}_\tau))\cap \mathcal{C}^1(\mathbb{R}_+; \, \cup_{\tau \geq 0}L^2(\mathcal{W}_\tau))$$
and
$$\hat{\psi}^a|_{\scri^+} = \zeta, \, \hat{\psi}^a|_{\mathfrak{H}^+\cap \{v\geq (\frac{5M}{2})_* \}}= \xi.$$
Here $\{ \mathcal{W}_\tau \}_{\tau \ge 0}$ (with $\mathcal{W}_0 = \widetilde{\mathcal{S}}_0$) is a foliation of $\mathcal{I}^+(\widetilde{\mathcal{S}}_0)$. Moreover, we can obtain energy estimates relating the energy of $\hat{\psi}^a$ on $\widetilde{\mathcal{S}}_0$ to the energy on 
$\mathfrak{H}^+_{v \ge \left( \frac{5M}{2} \right)_*} \cup \scri^+$,
as in Corollary~\ref{1}.

\begin{figure}[H]
\begin{center}
\includegraphics[scale=0.7]{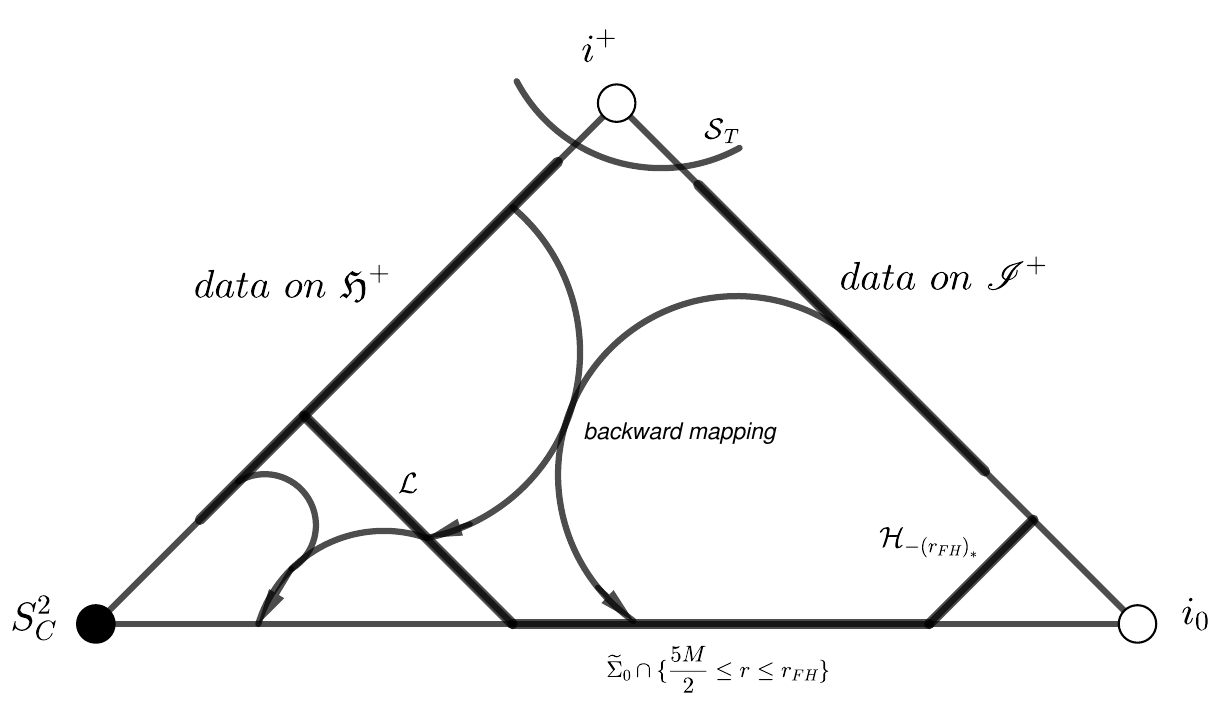}
\caption{Backward mapping from data on $\mathfrak{H}^+\cup \mathscr{I}^+$}
\end{center}
\end{figure} 

Consider the initial data 
$\bigl( \hat{\psi}^a \big|_{\mathcal{L}}, \, \xi \big|_{v \le \left( \frac{5M}{2} \right)_*} \bigr)$.
By applying again \cite[Theorem~3.2 and Proposition~6.1]{Jo2019}, the Goursat problem for equation \eqref{rescaledequation} is well-posed. We denote the corresponding solution by $\hat{\psi}^b$. 
Moreover, we can obtain energy estimates for $\hat{\psi}^b$ relating the total energy of the field through 
$\mathfrak{H}^+_{v \le \left( \frac{5M}{2} \right)_*} \cup \mathcal{L}$
to the energy on 
$\Sigma_0 \cap \left\{ 2M \le r \le \frac{5M}{2} \right\}$,
including the estimates as in Theorem~\ref{injective}.

The solution $\hat{\psi}$ of the Goursat problem for equation \eqref{rescaledequation} is obtained by gluing together the two solutions $\hat{\psi}^a$ and $\hat{\psi}^b$ of the Goursat problem for \eqref{rescaledequation} on the domains 
$\left\{ v \ge \left( \frac{5M}{2} \right)_* \right\}
\quad \text{and} \quad
\left\{ v \le \left( \frac{5M}{2} \right)_* \right\}$,
respectively. Moreover, if we denote
$$\hat{\psi}_0 = \hat{\psi}|_{\Sigma_0},\,\, \hat{\psi}_1 = \partial_t\hat{\psi}|_{\Sigma_0},$$
we obtain
$$(\hat{\psi}_0,\hat{\psi}_1)\in \mathcal{H} \hbox{  and  } (\xi,\zeta) = \mathbb{T}^+(\hat{\psi}_0,\hat{\psi}_1).$$
Therefore, the range of $\mathbb{T}^+$ contains $\mathcal{C}_0^\infty(\mathfrak{H}^+) \times \mathcal{C}_0^\infty(\scri^+)$ and $\mathbb{T}^+$ is surjective. The inverse operator $(\mathbb{T}^+)^{-1}$ is locally Lipschitz, by virtue of the energy estimates for $\hat{\psi}^a$ and $\hat{\psi}^b$ and by the same argument as in the proof of Theorem~\ref{injective}.
\end{proof}
Similarly to the operator $\mathbb{T}^+$, we define the past trace operator 
$\mathbb{T}^- : \mathcal{H} \to \mathcal{H}^-$, {associated with} solutions of equation \eqref{rescaledequation} on $\mathcal{I}^-(\Sigma_0)$. Combining the two operators $\mathbb{T}^\pm$, we now define the conformal scattering operator for equation \eqref{rescaledequation} as follows:
\begin{definition}
We define the scattering operator $\mathbb{S}$ as the map that associates to the past scattering data the corresponding future scattering data, namely
$$\mathbb{S} = \mathbb{T}^+ \circ (\mathbb{T}^-)^{-1} : \mathcal{H}^- \to \mathcal{H}^+.$$
The scattering operator $\mathbb{S}$ is invertible and locally Lipschitz.
\end{definition}
\noindent
{\bf Data availability.} No datasets were generated or analysed during the current study.\\
{\bf Ethics declarations.}
The author declares no conflict of interest.\\

\end{document}